\documentclass{article}
\usepackage[english]{babel} %
\usepackage[a4paper, left=4cm, right=4cm, top=3cm, bottom=3cm]{geometry}

\usepackage{amssymb,amsmath,amsthm}
\usepackage{enumitem}
\usepackage[justification=centering]{caption}

\usepackage{tikz,xcolor}
\usetikzlibrary{calc}

\usepackage{listings}
\usepackage{subfactors-figures}

\theoremstyle{plain}
\newtheorem{theorem}{Theorem} 
\newtheorem*{theorem*}{Theorem} 
\newtheorem{lemma}[theorem]{Lemma}
\newtheorem{corollary}[theorem]{Corollary}

\definecolor{xblue}{RGB}{0,0,139}%
\definecolor{xlightgreen}{RGB}{200,230,200}
\definecolor{xgold}{RGB}{255,215,0}

\DeclareMathOperator{\Cay}{Cay}
\DeclareMathOperator{\Aut}{Aut}
\DeclareMathOperator{\Ker}{Ker}
\DeclareMathOperator{\wt}{wt}
\makeatletter
  \newcommand{\smod}[1]{\allowbreak\mkern 6mu({\operator@font mod}\;#1)}
\makeatother
\def\GAP{\texttt{GAP} }

\usepackage[colorlinks=true, allcolors=xblue, bookmarks=false]{hyperref}

\title{Factors in finite groups and well-covered graphs}
\author{Mikhail Kabenyuk}
\date{}

\begin{document}\maketitle

\begin{abstract}
We study a combinatorial property of subsets in finite groups that is analogous 
to the notion of independence in graphs. Given a group $G$ and a non-empty subset $A\subset G$, 
we define  a (right) $s$-factor as a subset $B\subset G$ satisfying the following conditions: 

(i) Every element of $AB$ can be written uniquely as $ab$ with $a\in A$ and $b\in B$. 

(ii) $B$ is maximal (with respect to inclusion) with this property. 

For a finite group $G$, the upper and lower indices of $A$ are 
the sizes of the largest and smallest $s$-factors associated with $A$. 
A subset is called stable if its upper and lower indices coincide.  
A group is called stable if all its subsets are stable. 

We then explore the connection between $s$-factors in groups and maximal independent sets in graphs. 
Specifically, we show that $s$-factors in $G$ associated with $A$ correspond to 
maximal independent sets in a Cayley graph $\Cay(G, S)$, where $S=A^{-1}A\setminus\{e\}$. 
Consequently, the upper and lower indices of $A$ are equal to the independence number 
and the independent domination number of the associated Cayley graph.

The concepts of $s$-factors, subset indices in groups, stable subsets, and stable groups 
(under different names) were introduced by Hooshmand in 2020. 
Later, Hooshmand and Yousefian-Arani classified stable groups using computer calculations.

Using the connection with graphs, we compute the upper and lower indices 
for various groups and their subsets. Furthermore, 
we prove a classification theorem describing all stable groups 
without relying on computer calculations.

\end{abstract}

\section{Introduction}
\label{section:Introduction}
Let $G$ be a group and let $A$ be a fixed non-empty subset of $G$.
A non-empty subset $B\subset G$ is called a \textit{right $s$-factor of $G$ associated with $A$}
if every element $x\in AB$ can be written uniquely as $x=ab$ with $a\in A$ and $b\in B$
and $B$ is maximal (with respect to inclusion) with this property.
Right $s$-factors exist for every non-empty subset of $G$ (by Zorn’s lemma).

If $G$ is a finite group, then the size of a largest right $s$-factor in $G$ 
associated with $A$ is denoted by $|G:A|^+$, and the size of a smallest one is $|G:A|^-$ 
(these are the upper and lower right indices of $A$, respectively).
A subset $A\subset G$ is called \textit{right stable} in $G$ if its
right upper and lower indices are equal.
A group is called \textit{right stable} if all its subsets are right stable.

Left $s$-factors, left indices, and left stability are defined in the same way.
Throughout the paper, we focus on right $s$-factors and will often omit the qualifier "right".

The concept (under the name "sub-factor") was introduced in \cite{Hooshmand2020} and \cite{Hooshmand2023}. 
The term "subfactor" is well known in the theory of operator algebras, 
particularly in relation to von~Neumann factors; see, e.g., \cite{Jones}. 
Therefore, we use the term "$s$-factor". 
While it may not be ideal, it is shorter and avoids this conflict.
We also replace the terms "sub-index", "index stable set", 
and "index stable group" from \cite{Hooshmand2020} with "index," "stable set", 
and "stable group", respectively.

Note that for any $x\in G$, the sets $A$ and $xA$ have the same collection of right $s$-factors,
and thus $|G:A|^{\pm}=|G:xA|^{\pm}$.
Hence in many arguments we may assume $e\in A$.
Additionally, if $B$ is a right $s$-factor in $G$ associated with $A$,
then for any $y \in G$, $By$ is also a right $s$-factor of $G$ associated with $A$.

\medskip

Let $\Gamma$ be a finite simple graph.
We denote the vertex set and the edge set of $\Gamma$ by $V(\Gamma)$ and $E(\Gamma)$, respectively.
A subset of $V(\Gamma)$ is \textit{independent} if no two of its vertices are adjacent.
An independent set is \textit{maximal} if it is not properly contained in any larger independent set.
The size of a largest independent set in $\Gamma$ is the \textit{independence number} of $\Gamma$.
We denote it by $\alpha(\Gamma)$ (although $\beta(\Gamma)$ or $\beta_0(\Gamma)$ is also used in the literature).
Similarly, the \textit{lower independence number} $i(\Gamma)$ is defined as the size 
of a smallest maximal independent set in $\Gamma$.

An equivalent viewpoint uses dominating sets.
A subset $S$ of the vertices of $\Gamma$ is called \textit{dominating}
if every vertex not in $S$ is adjacent to some vertex in $S$.
The size of a minimum dominating set is the \textit{domination number} $\gamma(\Gamma)$.
The size of a minimum independent dominating set of $\Gamma$ is called 
the \textit{independent domination number} of $\Gamma$.

Since every maximal independent set is a dominating set, 
the lower independence number and the independent domination number coincide,
and, as already indicated, are denoted by $i(\Gamma)$.
As an illustration, consider the following statement.
\begin{lemma}
\label{lemma:Cyclic groups}
Let $C_n$ be the cycle graph on $n$ vertices.
Then $i(C_n)=\lceil n/3\rceil$ and $\alpha(C_n)=\lfloor n/2\rfloor$ for all $n\geq3$.
\end{lemma}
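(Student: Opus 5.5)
We prove the two equalities separately. Throughout, the vertices of $C_n$ are $\mathbb{Z}/n\mathbb{Z}$, and $j$ is adjacent to $j\pm1$.

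\emph{Independence number.} For the upper bound, take an independent set $I$ of size $k$. Each vertex of $I$ is incident to the two edges $\{j,j+1\}$ and $\{j-1,j\}$. No edge is incident to two vertices of $I$, since $I$ is independent. Hence these $2k$ edges are distinct, and $2k\le n$ because $C_n$ has exactly $n$ edges. For the lower bound, the set $\{0,2,4,\dots,2(\lfloor n/2\rfloor-1)\}$ is independent: consecutive chosen vertices differ by $2$. The last and first vertices are also nonadjacent, since $2(\lfloor n/2\rfloor-1)\le n-2$. This gives $\alpha(C_n)=\lfloor n/2\rfloor$.

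\emph{Lower bound for $i(C_n)$.} Let $D$ be any maximal independent set; by the remark above it is dominating. Each vertex of $C_n$ has a closed neighbourhood of exactly $3$ vertices. Domination means these closed neighbourhoods of the vertices of $D$ cover all $n$ vertices, so $3|D|\ge n$ and $|D|\ge\lceil n/3\rceil$.

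\emph{Upper bound for $i(C_n)$.} Let $k=\lceil n/3\rceil$ and take $D=\{0,3,6,\dots,3(k-1)\}$.
\begin{itemize}
\item \textbf{Domination.} The closed neighbourhoods of the vertices $3t$ cover $\{-1,0,1,\dots,3k-2\}$. Since $3k-2\ge n-2$ and $-1\equiv n-1$, every residue mod $n$ is covered.
\item \textbf{Independence.} Consecutive chosen vertices differ by $3$. It remains to check that the last vertex $3(k-1)$ is not adjacent to $0$ around the cycle, i.e.\ that $3(k-1)\le n-2$. Since $3k\le n+2$, we get $3(k-1)\le n-1$, with equality exactly when $n\equiv 1\pmod 3$.
\end{itemize}

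\emph{The case $n\equiv1\pmod 3$.} This wrap-around edge is the main obstacle. Here the last vertex sits at $n-1$, which is adjacent to $0$. To fix it, shift that last vertex to $n-2$; this is fine because $n\ge4$ in this case. The set stays independent, since $n-2-3(k-2)=2$. It stays dominating: the vertex $n-2$ covers $n-3,n-2,n-1$, and vertex $0$ covers $n-1$ anyway, while everything up to $3(k-2)+1=n-3$ is still covered. An independent dominating set is maximal independent, so $i(C_n)\le k$, which completes the proof.
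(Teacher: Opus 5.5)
Your proof is correct. It differs from the paper mainly in that the paper gives no argument at all: it cites Goddard's Proposition~1.1 for $i(C_n)$ and an online reference for $\alpha(C_n)$.

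Your version makes the lemma self-contained, using exactly the tools the paper itself deploys later:
\begin{itemize}
\item Your lower bound $3|D|\ge n$ is the case $\Delta=2$ of the inequality \eqref{ineq:IDN Berge} used in Section~3.
\item Your edge-counting bound on $\alpha$ does the same job as the gap-summing argument in Lemma~\ref{lemma:indices Dn}.
\item Your repair of the wrap-around edge when $n\equiv1\pmod 3$ (moving the last vertex from $n-1$ to $n-2$) is the same device as adding the extra vertex $2n-3$ in the dihedral construction when $r\in\{1,2\}$.
\end{itemize}

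Two checks are left implicit, but both are immediate.
\begin{itemize}
\item In each of your constructions, any two chosen labels have integer difference between $2$ and $n-2$. So no pair is adjacent, not just consecutive ones.
\item In the case $n\equiv1\pmod 3$, the shifted vertex $n-2$ must also be checked against $0$. Its neighbours are $n-3$ and $n-1$, neither of which is $0$ since $n\ge4$.
\end{itemize}

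The citation buys brevity. Your argument buys independence from external sources at the cost of a few lines.
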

\begin{proof}
These facts are well-known. 
For $i(C_n)$, see \cite[Proposition 1.1]{Goddard}, and for $\alpha(C_n)$, see \cite{Wolfram}.
\end{proof}

The graph $\Gamma$ is  \textit{well-covered} 
if every maximal independent set in $\Gamma$ is maximum,
or equivalently $\alpha(\Gamma)=i(\Gamma)$.
This concept was introduced by Plummer in 1970 \cite{Plummer}.
Simple examples of well-covered graphs are the cycles $C_3$, $C_4$, $C_5$, and $C_7$,
and no other cycle $C_n$ is well-covered.
This follows immediately from Lemma~\ref{lemma:Cyclic groups}.

We recall the definition of a Cayley graph.
We work with left Cayley graphs, defined as follows. 
Let $G$ be a group, and let $S$ be a subset of $G$ such that $e\notin S$ and $S=S^{-1}$.
The (left) Cayley graph $\Cay(G,S)$ is the graph with vertex set $G$
and edge set
\[
\{\{g,sg\}\mid g\in G,s\in S\}.
\]
Vertices $u,v\in G$ are adjacent (sometimes written $u \sim v$) 
if and only if $v u^{-1}\in S$.

Right Cayley graphs are defined analogously.
The vertex set is $G$, and the edges are pairs of vertices 
of the form $\{g,gs\}$, where $g\in G$ and $s\in S$. 
Throughout the paper, unless stated otherwise, $\Cay(G,S)$ refers to the left Cayley graph.

Observe that $\Cay(G,S)$ is connected if and only if $S$ generates $G$.
If $H = \langle S \rangle$ is the subgroup generated by $S$, 
then $\Cay(G,S)$ has $|G:H|$ connected components, 
each isomorphic to $\Cay(H,S)$ and corresponding to a right coset $Hg$ with $g \in G$.
Moreover, $\Cay(G,S)$ is $|S|$-regular.

We are now ready to describe the relationship between $s$-factors and independent sets in graphs.
First, we introduce the notation:
\begin{equation}\label{partial}
\partial A = A^{-1} A \setminus \{e\}.
\end{equation}

\begin{lemma}\label{lemma:The relationship between $s$-factors and Cayley graphs}
  Let $G$ be a group, and let $A$ be a fixed non-empty subset of $G$.
    
  \textup{(i)} A subset $B\subset G$ is a right $s$-factor of $G$ associated with $A$
  if and only if $B$ is a maximal independent set in the Cayley graph $\Cay(G,\partial A)$.
    
  \textup{(ii)} Moreover, we have
\[
|G:A|^+=\alpha\bigl(\Cay(G,\partial A)\bigr),\quad
|G:A|^-=i\bigl(\Cay(G,\partial A)\bigr).
\]
\end{lemma}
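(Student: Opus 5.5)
The plan is to reduce the uniqueness condition in the definition of a right $s$-factor to a pairwise condition on elements of $B$, and then read that pairwise condition as non-adjacency in $\Cay(G,\partial A)$. First note that $\partial A$ is a legitimate connection set. It is symmetric because $(a_1^{-1}a_2)^{-1}=a_2^{-1}a_1$. It does not contain $e$ by definition. So $\Cay(G,\partial A)$ is a well-defined simple graph.

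The key step is the following claim: a subset $B\subset G$ has the unique-representation property (every $x\in AB$ is $ab$ for exactly one pair $(a,b)\in A\times B$) if and only if $B$ is independent in $\Cay(G,\partial A)$.

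For the first direction, suppose uniqueness fails. Then there are pairs $(a_1,b_1)\neq(a_2,b_2)$ with $a_1b_1=a_2b_2$. If $b_1=b_2$, cancellation forces $a_1=a_2$, which is impossible. Hence $b_1\neq b_2$, and $b_2b_1^{-1}=a_2^{-1}a_1\in A^{-1}A\setminus\{e\}=\partial A$. So $b_1\sim b_2$ in the left Cayley graph, since $v\sim u$ exactly when $vu^{-1}\in\partial A$.

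For the converse, suppose $b_1\neq b_2$ are adjacent vertices in $B$. Then $b_2b_1^{-1}=a_2^{-1}a_1$ for some $a_1,a_2\in A$, which gives $a_1b_1=a_2b_2$. These are two distinct representations of the same element of $AB$, because $b_1\ne b_2$.

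Part (i) then follows. The sets with the unique-representation property are exactly the independent sets, and this correspondence preserves inclusion. Therefore the maximal members of one family are exactly the maximal members of the other. There is one small point to check: the definition requires $B$ to be non-empty. This is harmless, because any single vertex is independent, so a maximal independent set is never empty.

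Part (ii) is immediate for finite $G$. By (i), the largest and smallest right $s$-factors are exactly the largest and smallest maximal independent sets. Their sizes are $\alpha$ and $i$ by the definitions recalled earlier, where $i(\Gamma)$ was identified with the size of a smallest maximal independent set.

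I do not expect a genuine obstacle here. The only place needing care is matching the side conventions: the set $A^{-1}A$, rather than $AA^{-1}$, must fit the left Cayley graph with adjacency $vu^{-1}\in S$. The computation $b_2b_1^{-1}=a_2^{-1}a_1$ confirms that it does.
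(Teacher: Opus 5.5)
Your proposal is correct and follows the paper's argument: the paper's one-line proof is the observation that for $x\neq y$ one has $x\sim y \Leftrightarrow yx^{-1}\in A^{-1}A \Leftrightarrow Ax\cap Ay\neq\varnothing$. This is exactly your pairwise reduction of the unique-representation property to non-adjacency; you also spell out details the paper leaves implicit, namely cancellation when $b_1=b_2$, the symmetry of $\partial A$, and the non-emptiness of maximal independent sets.
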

\begin{proof}
If $x\neq y$, then
\[
x\sim y\;\Leftrightarrow\; yx^{-1}\in A^{-1}A\; \Leftrightarrow\;Ax\cap Ay\neq\varnothing,
\]
and the claim follows.
\end{proof}

\begin{lemma}[see \cite{HooshmandArani}, Theorem~4.1]
\label{lemma:indices 01 in Zn}
Let $A=\{0,1\}\subset\mathbb{Z}_n$. Then for each $n\geq3$
\[
|\mathbb{Z}_n:A|^-=\lceil n/3\rceil\text{ and }
|\mathbb{Z}_n:A|^+=\lfloor n/2\rfloor.
\]
\end{lemma}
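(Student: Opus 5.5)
The plan is to reduce the statement to Lemma~\ref{lemma:Cyclic groups} through Lemma~\ref{lemma:The relationship between $s$-factors and Cayley graphs}. Since $\mathbb{Z}_n$ is abelian, written additively, we have
\[
\partial A=(-A)+A\setminus\{0\}.
\]
For $A=\{0,1\}$ the sumset $(-A)+A$ equals $\{-1,0,1\}$, so $\partial A=\{1,-1\}$.

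By part (ii) of Lemma~\ref{lemma:The relationship between $s$-factors and Cayley graphs},
\[
|\mathbb{Z}_n:A|^{+}=\alpha\bigl(\Cay(\mathbb{Z}_n,\{\pm1\})\bigr),\qquad
|\mathbb{Z}_n:A|^{-}=i\bigl(\Cay(\mathbb{Z}_n,\{\pm1\})\bigr).
\]
It then remains to identify the graph $\Cay(\mathbb{Z}_n,\{\pm1\})$ with the cycle $C_n$ for $n\ge 3$. Its vertices are $0,1,\dots,n-1$, and $u\sim v$ exactly when $v-u\equiv\pm1\pmod n$. When $n\ge3$ we have $1\ne -1$ in $\mathbb{Z}_n$. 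The graph is therefore simple and $2$-regular, and it is connected because $1$ generates $\mathbb{Z}_n$. The map $k\mapsto k$ sends it onto the standard $n$-cycle $0,1,\dots,n-1,0$.

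Lemma~\ref{lemma:Cyclic groups} now gives $i(C_n)=\lceil n/3\rceil$ and $\alpha(C_n)=\lfloor n/2\rfloor$, which are the claimed values.

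The only point needing care is the hypothesis $n\ge3$, which ensures $\partial A$ has two distinct elements. For $n=2$ the set $\partial A$ collapses to $\{1\}$, the Cayley graph is $K_2$ rather than a cycle, and the argument no longer applies. Beyond this there is no real obstacle; the content lies entirely in the cited facts about cycles.
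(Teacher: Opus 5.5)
Your proposal is correct and matches the paper's proof: compute $\partial A=\{1,-1\}$, identify $\Cay(\mathbb{Z}_n,\partial A)$ with the cycle $C_n$, and combine the correspondence between $s$-factors and maximal independent sets with Lemma~\ref{lemma:Cyclic groups}. Your extra checks ($1\neq -1$ for $n\ge 3$, $2$-regularity, connectedness) only spell out what the paper calls clear.
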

\begin{proof}
It is clear that $\partial A=\{1,-1\}$, and the Cayley graph $\Cay(\mathbb{Z}_n,\partial A)$ 
is the cycle $C_n$.
Hence the lemma follows from Lemmas~\ref{lemma:Cyclic groups} 
and~\ref{lemma:The relationship between $s$-factors and Cayley graphs}.
\end{proof}

The rest of the paper is organized as follows.
In Section~\ref{section:general properties}, we prove a general statement 
about the instability of groups with a cyclic quotient.
In Section~\ref{section:Dihedral}, 
we compute the independence number and the independent domination number
of a Cayley graph of the dihedral group $D_n$ of order $2n$,
with respect to a specific generating set.
In Sections~\ref{section:group of order 21 and 27} --
\ref{section:Order 16}, 
we prove that the following groups are not stable:
a nonabelian group of order $21$ and $UT(3,3)$;
the groups $\mathbb{Z}_2^5$ and $\mathbb{Z}_3^3$;
the groups $A_4$ and $(C_3\times C_3)\rtimes C_2$; 
and certain groups of order $16$.
In Section~\ref{section:Characterise finite stable groups} we prove 
the classification theorem for finite stable groups.
In Section~\ref{section:gap} we provide \GAP code that shifts the burden of computing 
the independence number and the independent domination number for certain groups 
from Sections~\ref{section:group of order 21 and 27}--\ref{section:Order 16} to \GAP\unskip. 
This code can be used for any finite group given by a presentation.

We use the following notations from group theory and graph theory.
The symbol $e$ denotes the identity element of a group.
If $A$ and $B$ are subsets of a group, then
\[
A^{-1}=\{a^{-1}\mid a\in A\}\quad\text{and}\quad
AB=\{ab\mid a\in A,\;b\in B\}.
\]
If $v$ is a vertex of a graph, then $N(v)$ denotes the set of all neighbors of $v$, 
and $N[v] = N(v)\cup\{v\}$.
If $v_1,\ldots,v_k$ (with $k \ge 1$) are vertices of a graph, then
\[
N[v_1,\ldots,v_k]=\bigcup_{i=1}^k N[v_i].
\]
If $u$ and $v$ are vertices of a graph, then we write $u \sim v$ if and only if $u$ and $v$ are adjacent.

\section{Some general properties of stability}
\label{section:general properties}

\begin{lemma}
\label{lemma:On the subgroup index}
Let $H$ be a subgroup of a finite group $G$, and let $S\subset H$ be a symmetric subset 
not containing the identity
(i.e., $S=S^{-1}$ and $e\notin S$).
Then
\begin{align}
  i\bigl(\Cay(G,S)\bigr)=|G:H|\cdot i\bigl(\Cay(H,S)\bigr), \label{eq:On the subgroup index and IDN}\\
 \alpha\bigl(\Cay(G,S)\bigr)=|G:H|\cdot\alpha\bigl(\Cay(H,S)\bigr).\label{eq:On the subgroup index and IN}
\end{align}
\end{lemma}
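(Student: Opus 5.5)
The plan is to decompose $\Cay(G,S)$ into pieces indexed by the right cosets of $H$, identify each piece with $\Cay(H,S)$, and then use the fact that both invariants are additive over disjoint unions with no edges between the parts.

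First I will describe the structure of the graph. Since $S\subset H$, two vertices $u,v$ are adjacent only if $vu^{-1}\in S\subset H$. This forces $Hu=Hv$, so every edge of $\Cay(G,S)$ lies inside a single right coset $Hg$. The graph is therefore the disjoint union of the induced subgraphs $\Gamma_g$ on the cosets $Hg$, with no edges between different cosets. This holds even when $S$ does not generate $H$; then each $\Gamma_g$ may itself be disconnected, which does no harm. Next I will check that the map $h\mapsto hg$ is a graph isomorphism from $\Cay(H,S)$ onto $\Gamma_g$. It is a bijection $H\to Hg$, and $(h'g)(hg)^{-1}=h'h^{-1}$, so adjacency is preserved in both directions. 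Hence there are exactly $|G:H|$ components $\Gamma_g$, each isomorphic to $\Cay(H,S)$.

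The main step is the general fact that for a graph $\Gamma=\Gamma_1\sqcup\dots\sqcup\Gamma_m$ with no edges between distinct $\Gamma_j$, we have $\alpha(\Gamma)=\sum_j\alpha(\Gamma_j)$ and $i(\Gamma)=\sum_j i(\Gamma_j)$. To prove it, I will show that a set $I\subset V(\Gamma)$ is a maximal independent set of $\Gamma$ if and only if each $I\cap V(\Gamma_j)$ is a maximal independent set of $\Gamma_j$.
\begin{itemize}
\item Independence of $I$ clearly splits across the parts, since there are no edges between them.
\item For maximality, suppose some $I_j=I\cap V(\Gamma_j)$ can be enlarged by a vertex $v\in V(\Gamma_j)$. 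Then $v$ has no neighbours in $I$ at all, because its only neighbours lie in $\Gamma_j$. So $I$ is not maximal.
\item Conversely, if every $I_j$ is maximal, then any vertex $v\notin I$ lies in some $\Gamma_j$ and has a neighbour in $I_j$, so $I$ is maximal.
\end{itemize}
Taking the maximum and the minimum of $|I|=\sum_j|I_j|$ over independent choices of the $I_j$ then gives both additivity formulas. This is the only point needing care: it is the minimum, i.e.\ the statement about $i$, that genuinely uses the characterization of maximality componentwise. I expect it to be routine.

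Finally, I combine the two steps with $m=|G:H|$ and all $\Gamma_j\cong\Cay(H,S)$. This yields \eqref{eq:On the subgroup index and IDN} and \eqref{eq:On the subgroup index and IN} immediately.
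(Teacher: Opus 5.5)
Your proposal is correct and takes the same approach as the paper: split $\Cay(G,S)$ into the induced subgraphs on the right cosets $Hg$, note that each is isomorphic to $\Cay(H,S)$, and use additivity of $\alpha$ and $i$ over disjoint unions. You also spell out two steps the paper leaves implicit, namely the isomorphism $h\mapsto hg$ and the componentwise characterization of maximal independent sets.
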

\begin{proof}
Let $T$ be a right transversal for $H$ in $G$. 
For each $t\in T$, define the graph $\Cay(Ht,S)$ as follows:
its vertices are the elements of the coset $Ht$, and
two vertices $v,w\in Ht$ are adjacent if and only if there exists
$s\in S$ such that $w=sv$ (that is, $wv^{-1}\in S$).
Note that $\Cay(Ht,S)\cong\Cay(H,S)$.
The graph $\Cay(G,S)$ is the disjoint union of the graphs $\Cay(Ht,S)$ for $t\in T$. 
Hence
\eqref{eq:On the subgroup index and IDN} and 
\eqref{eq:On the subgroup index and IN}
both follow.
\end{proof}

\begin{corollary}
\label{corollary: index stable of subgroups}
  If a finite group is stable, then all its subgroups are stable.
\end{corollary}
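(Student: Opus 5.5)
We argue by contraposition: if some subgroup $H$ of the finite group $G$ has a subset that is not stable in $H$, we produce a subset of $G$ that is not stable in $G$. The natural candidate is the very same subset.

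Let $A\subset H$ be non-empty with $|H:A|^+\neq|H:A|^-$. Since $A\subset H$, we get $A^{-1}A\subset H$, so $S=\partial A$ is a symmetric subset of $H$ that does not contain $e$. Moreover, $\partial A$ is computed identically whether we regard $A$ as a subset of $H$ or of $G$. The graph $\Cay(H,\partial A)$ is therefore exactly the Cayley graph attached to $A$ inside $H$. By Lemma~\ref{lemma:The relationship between $s$-factors and Cayley graphs}(ii), applied once in $H$ and once in $G$,
\[
|H:A|^{\pm}\text{ are } i\bigl(\Cay(H,\partial A)\bigr),\ \alpha\bigl(\Cay(H,\partial A)\bigr),\qquad |G:A|^{\pm}\text{ are } i\bigl(\Cay(G,\partial A)\bigr),\ \alpha\bigl(\Cay(G,\partial A)\bigr).
\]

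Now apply Lemma~\ref{lemma:On the subgroup index} with $S=\partial A\subset H$. It gives $|G:A|^-=|G:H|\cdot|H:A|^-$ and $|G:A|^+=|G:H|\cdot|H:A|^+$. The factor $|G:H|$ is a positive integer, so the lower and upper indices of $A$ in $G$ coincide if and only if they coincide in $H$. Hence $A$ is not stable in $G$, so $G$ is not stable. This is the contrapositive of the statement.

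There is essentially no obstacle. The only points needing care are that $\partial A$ really lies in $H$ and is independent of the ambient group, both of which are immediate, and the degenerate case $\partial A=\varnothing$ (that is, $|A|=1$). In that case the Cayley graph is edgeless and $A$ is trivially stable, so it never arises from an unstable $A$; in any event Lemma~\ref{lemma:On the subgroup index} still applies.
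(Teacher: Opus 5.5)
Your proposal is correct and is the argument the paper intends. The paper states this corollary without a separate proof, as an immediate consequence of Lemma~\ref{lemma:On the subgroup index} applied with $S=\partial A\subset H$, together with the identification of the upper and lower indices with $\alpha$ and $i$ of $\Cay(G,\partial A)$; this gives $|G:A|^{\pm}=|G:H|\cdot|H:A|^{\pm}$, exactly as you argue.
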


\begin{lemma}
\label{lemma:On cyclic quotient group}
    Let $H$ be a normal subgroup of a finite group $G$. 
    If the quotient group $G/H$ is a cyclic group of order $n\geq4$, 
    and $|H|\geq3$, then $G$ is not stable.
\end{lemma}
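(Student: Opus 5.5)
The plan is to exhibit one subset $A\subset G$ that is not stable. By Lemma~\ref{lemma:The relationship between $s$-factors and Cayley graphs}, this amounts to showing that $\Cay(G,\partial A)$ is not well-covered, i.e.\ $i\bigl(\Cay(G,\partial A)\bigr)<\alpha\bigl(\Cay(G,\partial A)\bigr)$. Fix $g\in G$ with $gH$ generating $G/H\cong\mathbb{Z}_n$, and write the cosets as $X_k=Hg^k$ for $k\in\mathbb{Z}_n$.

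\textbf{Why a naive choice fails.} The first idea is $A=H\cup\{g\}$. Normality of $H$ gives
\[
\partial A=(H\setminus\{e\})\cup Hg\cup Hg^{-1}.
\]
So $\Cay(G,\partial A)$ is the lexicographic product $C_n[K_{|H|}]$: each $X_k$ is a clique, and consecutive cosets are completely joined. A maximal independent set then takes exactly one vertex from each coset of a maximal independent set of $C_n$. Hence $\alpha=\lfloor n/2\rfloor$ and $i=\lceil n/3\rceil$ by Lemma~\ref{lemma:Cyclic groups}. This already shows instability for $n=6$ and $n\ge 8$. It fails for $n=4,5,7$, and it does not use $|H|\ge3$, so it cannot be the whole argument.

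\textbf{Main construction.} I would instead use $A=\{e,h,g\}$ with $h\in H\setminus\{e\}$, or a small variant such as $A=\{e,h,g,gh'\}$. The point is that $A$ meets each coset in only a few elements, so the edges inside a coset and between consecutive cosets are sparse. For $A=\{e,h,g\}$ one gets
\[
\partial A=\{h^{\pm1},\,g^{\pm1},\,(h^{-1}g)^{\pm1}\}.
\]
Inside each $X_k$ the edges are the $\langle h\rangle$-cycles (or a matching if $h^2=e$). Between $X_k$ and $X_{k+1}$, each vertex $x$ is joined only to $gx$ and $h^{-1}gx$.

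\textbf{The two estimates.}
\begin{itemize}
\item Lower bound on $\alpha$: take a large independent set inside every other coset. Roughly $\lfloor n/2\rfloor\cdot\alpha\bigl(\Cay(H,\{h^{\pm1}\})\bigr)$ should be achievable, or more, by using the sparse cross edges to also fill the remaining cosets partially.
\item Upper bound on $i$: build a maximal independent set that dominates efficiently. A vertex in $X_k$ dominates two vertices in each of $X_{k-1}$ and $X_{k+1}$, besides its $\langle h\rangle$-neighbours. A pattern using a period of $3$ cosets should give $i$ about $|G|/5$ or smaller, while $\alpha$ is about $|G|/3$ or larger.
\end{itemize}
The hypothesis $|H|\ge3$ enters by guaranteeing a choice of $h$ (of order $\ge3$, or two distinct nontrivial elements when $H$ is elementary abelian of exponent $2$) for which the in-coset graph is non-trivial enough to create this gap. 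Lemma~\ref{lemma:On the subgroup index} lets me reduce to $G=\langle H,g\rangle$ where needed.

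\textbf{Main obstacle.} I expect the hard part to be the small quotients $n=4,5,7$, where $C_n$ itself is well-covered. There the gap must come entirely from the in-coset structure. I would first try to write down explicit independent sets of the two different sizes for $n=4$, then check that the same pattern extends to $n=5$ and $n=7$. For large $n$ I would simply fall back on the lexicographic-product argument above.
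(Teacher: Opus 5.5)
\textbf{There is a genuine gap: the cases $n\in\{4,5,7\}$ are not proved, and the candidate sets you name can fail.}

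Your analysis of $A=H\cup\{g\}$ is correct. The graph is $C_n[K_{|H|}]$, so $\alpha=\lfloor n/2\rfloor$ and $i=\lceil n/3\rceil$, which settles $n=6$ and $n\ge 8$. But that is the easy part. The content of the lemma is $n\in\{4,5,7\}$, and there your proposal contains no proof:
\begin{itemize}
\item $A$ is never fixed, and neither bound is established.
\item The densities ``$\alpha\approx|G|/3$, $i\approx|G|/5$'' are heuristics that say nothing when, say, $|H|=3$ and $n=4$.
\item They also ignore the twist from conjugation by $g$ and from $g^n\in H$ when you go around the cycle of cosets.
\end{itemize}
Worse, the sets you name can fail. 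Take $G=C_2^2\times C_4$, $H=C_2^2\times\{0\}$, $n=4$.
\begin{itemize}
\item Every nonidentity $h\in H$ has order $2$. So $A=\{e,h,g\}$ generates $K\cong C_2\times C_4$, and $K\setminus A^{-1}A=\{g^2,g^2h\}$ is a single edge. By Lemmas~\ref{lemma:vertex-transitive} and~\ref{lemma:On the subgroup index}, $\Cay(G,\partial A)$ is well-covered.
\item Your variant $A=\{e,h,g,gh'\}$ with $h'\ne h$ gives $A^{-1}A=\{e,h,h'\}\cup Hg\cup Hg^{-1}$. The graph is then the lexicographic product $C_4[C_4]$, in which every maximal independent set has size $2\cdot 2=4$.
\end{itemize}
So the construction, as proposed, does not prove the lemma.

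\textbf{The missing idea.} Delete one element from your naive set: $A=(H\setminus\{h\})\cup\{g\}$.
\begin{itemize}
\item Since $|H|\ge 3$, $h=x^{-1}(xh)$ with $x\in H\setminus\{e,h\}$ still lies in $A^{-1}A$. Hence $A^{-1}A=H\cup(Hg\setminus\{h^{-1}g\})\cup(g^{-1}H\setminus\{g^{-1}h\})$.
\item Every coset therefore remains a clique. The complete join between consecutive cosets loses exactly the matching $x\mapsto h^{-1}gx$.
\item This gives the independent set $\{e,h^{-1}g,\dots,(h^{-1}g)^{n-2}\}$ of size $n-1$.
\item On the other side, put one vertex $v_i=g^{2i}x_i$ in each even coset, with $x_k=h$ when $n$ is odd so that $v_k=g^{-1}h$ is not adjacent to $e$.
\item Each $v_i$ dominates its own coset and all but one vertex of each neighbouring odd coset. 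Using $|H|\ge 3$ again, choose the $x_i$ so that the two ``holes'' in each odd coset differ.
\item This yields an independent dominating set of size $\lceil n/2\rceil<n-1$.
\end{itemize}
The paper also first reduces to $g^n=e$. Otherwise $|g|\ge 2n\ge 8$, and $\langle g\rangle$ is already unstable by Lemma~\ref{lemma:indices 01 in Zn}.
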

\begin{proof}
Choose an element $g\in G$ such that $\langle Hg\rangle=G/H$.
We can assume that $g^n=e$; otherwise, the order of $g$ is a multiple of $n$ 
and is strictly larger than $n$, hence at least $2n\geq 8$,
and by Lemma~\ref{lemma:indices 01 in Zn} and 
Corollary~\ref{corollary: index stable of subgroups} 
the group $G$ is not stable.
Further, if $\operatorname{ord}(g)=n$ and $n\notin\{4,5,7\}$, then $\langle g\rangle$ is 
not stable by Lemma~\ref{lemma:indices 01 in Zn}, and hence $G$ is not stable by 
Corollary~\ref{corollary: index stable of subgroups}. 
Although we could restrict our attention to $n\in\{4,5,7\}$, 
we carry out the proof for general $n\geq4$, as it does not substantially simplify the argument.

Let $h\in H$ be a nonidentity element, and define
\[
    A=\bigl(H\setminus\{h\}\bigr)\cup \{g\}.
\]
Then
\begin{equation}\label{eq:partial A and A-1A}
  A^{-1}A=H\;\cup\;
    \left(Hg\setminus\{h^{-1}g\}\right)\;\cup\; 
    \left(g^{-1}H\setminus\{g^{-1}h\}\right),
\end{equation}
and $\partial A=A^{-1}A\setminus\{e\}$. 
Here it is important to note that $h\in A^{-1}A$, which is straightforward to verify.
Let $k=\lceil n/2\rceil-1$ and 
let $x_1,\ldots,x_k$ be some elements of $H$
(the precise choice will be specified later). 
For convenience, set $x_0=e$ and define
\[
v_i=g^{2i}x_i,\;i=0,1,\ldots,k.
\]
If $2k=n-1$ (i.e., if $n$ is odd), choose $x_k=h$. In this case $v_k=g^{-1}h$.
We now show that the set
\[
I=\{v_0,v_1,\ldots,v_k\}
\]
is an independent set in the Cayley graph $\Cay(G,\partial A)$
for any choice of $x_i\in H$ for $1\leq i\leq k$ (with $x_k=h$ when $n$ is odd).
Suppose, for contradiction, that two vertices
$v_i$ and $v_j$ (with $0\leq 2i<2j<n$) are adjacent in $\Cay(G,\partial A)$;
that is, 
\[
v_jv_i^{-1}\in\partial A.
\]
There are three cases to consider:
\[
  \text{(i)}\ v_jv_i^{-1}\in H,\quad 
  \text{(ii)}\ v_jv_i^{-1}\in Hg,\quad 
  \text{(iii)}\ v_jv_i^{-1}\in g^{-1}H.
\]

\smallskip\noindent
\textbf{Case} (i).
If $v_jv_i^{-1}\in H$, then 
\[
v_jv_i^{-1}=g^{2(j-i)}(g^{2i}x_jx_i^{-1}g^{-2i})\in H,
\]
so in particular $g^{2(j-i)}\in H$. 
Since $0<2(j-i)<n$, this contradicts the assumption that the order of $g$ modulo $H$ is $n$.

\smallskip\noindent
\textbf{Case} (ii).
If $v_jv_i^{-1}\in Hg$, then a similar calculation shows that
$g^{2(j-i)-1}\in H$.
Since $1\leq2j-2i-1<n$, this case is also impossible.

\smallskip\noindent
\textbf{Case} (iii).
If $v_jv_i^{-1}\in g^{-1}H$, then $g^{2(j-i)+1}\in H$.  
This implies $2(j-i)+1=n$. 
In particular, we must have $j=k$ and $i=0$.
But by our choice (when $2k=n-1$) we set $x_k=h$,
so that
\[
v_jv_i^{-1}=v_k=g^{2k}x_k=g^{n-1}h=g^{-1}h.
\]
Since we assumed $v_jv_i^{-1}\in\partial A$ and $v_jv_i^{-1}=g^{-1}h$, it follows that $g^{-1}h\in\partial A$.
However, from \eqref{eq:partial A and A-1A}, we conclude that
$g^{-1}h\notin\partial A$.
This contradiction implies that this case is also impossible.

Thus, no edge exists between any two distinct vertices in $I$, 
which means that $I$ is indeed an independent set in $\Cay(G,\partial A)$.

\begin{figure}[t]
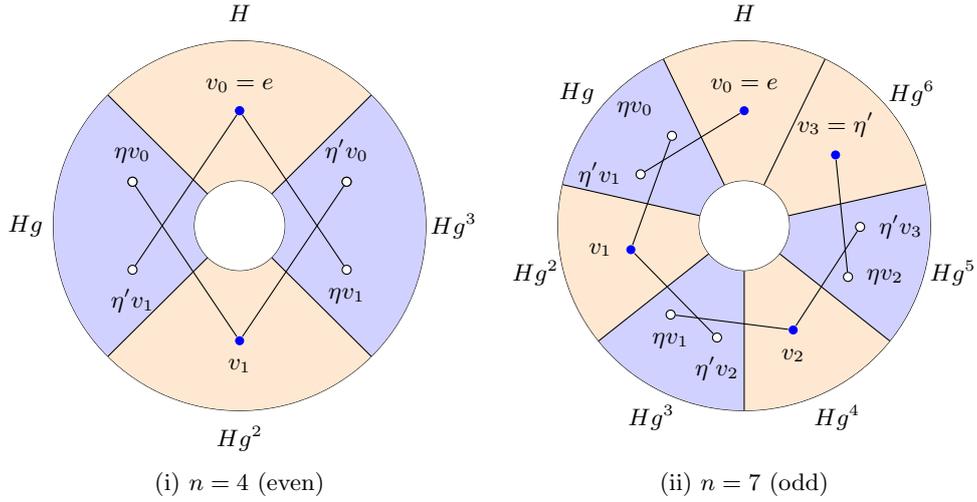

\centering
\def\R{2.45}     
\def\r{0.6}       
\def\RH{\R*1.15}  
\def\CoordLabel{\R*1.4} 

\CyclicQuotientGroupEven{4}
\CyclicQuotientGroupOdd{7}

\caption{\footnotesize
For Lemma~\ref{lemma:On cyclic quotient group}.
The set $I=\{v_0,\dots,v_k\}$ lies in the even cosets.
For each $i$, $N[v_i]=(A^{-1}A)v_i$ covers $Hg^{2i}$ and covers each adjacent odd coset $Hg^{2i\pm1}$ except for two “holes”: $\eta v_i$ and $\eta'v_i$.
The condition $\eta v_i\neq \eta'v_{i+1}$ ensures $Hg^{2i+1}\subset N[v_i]\cup N[v_{i+1}]$.
In the odd case, $v_k=g^{-1}h\in Hg^{n-1}$ and the unique non-neighbor of $v_k$ in $Hg^{2k-1}$ is $(g^{-1}h)^2$. 
}
\label{fig:coset-cycle}
\end{figure}

Next, we prove that with an appropriate choice of $x_i$, 
the set $I$ is a dominating set in $\Cay(G,\partial A)$.
It is convenient to follow the subsequent argument using Figure~\ref{fig:coset-cycle}.
By the definition of the Cayley graph $\Cay(G, \partial A)$, we have 
$N[v_i] = (A^{-1}A)v_i$ 
for each $i = 0, 1, \ldots, k$. From \eqref{eq:partial A and A-1A}, 
it follows that for each $i=0,1,\ldots,k$, the following inclusions hold:
\begin{align}
  & Hg^{2i}\subset N[v_i], \label{eq:Inclusions even cosets}\\
  & Hg^{2i+1}\setminus\{h^{-1}gv_i\}=
    Hg^{2i+1}-\eta v_i\subset N[v_i],\label{eq:Inclusions odd cosets above}\\
  & Hg^{2i-1}\setminus\{g^{-1}hv_i\}=
    Hg^{2i-1}-\eta' v_i\subset N[v_i],\label{eq:Inclusions odd cosets below}
\end{align}
where, for the sake of compactness, we write $X - x$ instead of $X\setminus \{x\}$ 
and define $\eta = h^{-1}g$ and $\eta' = \eta^{-1}$.
We list the cosets appearing in formulas \eqref{eq:Inclusions odd cosets above} and 
\eqref{eq:Inclusions odd cosets below} in the table below:
\begin{equation}\label{array:Odd cosets}
\setlength{\arraycolsep}{2pt} 
\begin{array}{cccccc}
                     & Hg-\eta v_0, &  \ldots, & Hg^{2k-1}-\eta v_{k-1}, & Hg^{2k+1}-\eta v_k, \\
  Hg^{n-1}-\eta'v_0, & Hg-\eta'v_1, &  \ldots, & Hg^{2k-1}-\eta'v_k,     & 
\end{array}
\end{equation}
From \eqref{array:Odd cosets}, we see that if $\eta v_i \neq \eta' v_{i+1}$, then 
\begin{equation}\label{eq:Inclusions odd cosets}
Hg^{2i+1} \subset N[v_i] \cup N[v_{i+1}]
\end{equation}
for each $i = 0, \ldots, k-1$. 
To ensure the condition $\eta v_i \neq \eta' v_{i+1}$ for $i = 0, \ldots, k-2$, 
it is clear how $x_1, \ldots, x_{k-1}$ should be chosen.
If $x_1,\ldots, x_i$ have already been chosen, we select $x_{i+1}$ so that 
$\eta v_i \neq \eta' v_{i+1}$, which is equivalent to the condition 
\[
x_{i+1} \neq g^{-2i-2}(h^{-1}g)^2g^{2i}x_i,\;i = 0, \ldots, k-2.
\]
Recall that, by convention, $x_0 = e$.
Finally, let us consider the case when $i = k - 1$, that is, when $x_k$ has been chosen.
We distinguish two cases: $n$ is even or $n$ is odd.

If $n$ is even, we must choose $x_k$ such that 
$\eta v_{k-1} \neq \eta' v_k$ and, since in this case 
$2k+1 = n-1$, $\eta v_k \neq \eta' v_0$.
Thus, we choose $x_k$ such that 
\[
x_k\neq g^{-2k}(h^{-1}g)^2g^{2k-2}x_{k-1} \text{ and } x_k\neq g^{-2k}(g^{-1}h)^2.
\]
Since $|H| \geq 3$, both conditions can be satisfied.

Now, let $n$ be odd. 
We know that in this case, $n=2k+1$, $v_k=g^{-1}h$,
and $\left(g^{-1}h\right)^2$ is the only vertex in
the coset $Hg^{2k-1}$ that is not adjacent to $v_k$.
Since $x_k = h$ is fixed, we adjust the choice of $x_{k-1}$ (already made) 
to ensure that both $\eta' v_{k-1} \neq \eta v_{k-2}$ and $\eta v_{k-1} \neq \eta' v_k$ hold.
We must satisfy the condition
\[
x_{k-1}\neq g^{-2k+2}(h^{-1}g)^2g^{2k-4}x_{k-2}\text{ and } 
x_{k-1}\neq g^{-2k+2}(g^{-1}h)^3.
\]
As above, such a choice is possible since $|H| \geq 3$.

Thus, by \eqref{eq:Inclusions even cosets}, every coset of the form $Hg^{2i}$ 
lies in $N[v_i]$ for $i = 0, 1, \ldots, k$.
Moreover, with an appropriate choice of $x_i$, it follows from \eqref{eq:Inclusions odd cosets} that 
every coset of the form $Hg^{2i+1}$ lies in $N[v_i] \cup N[v_{i+1}]$ for $i = 0, 1, \ldots, k-1$. 
As a result 
\[
G=\bigcup_{i=0}^k N[v_i].
\] 
Therefore, $I$ is a dominating set in $\Cay(G, \partial A)$.
Thus, we have proved that $I$ is an independent dominating set. 
Hence, $i\bigl(\Cay(G,\partial A)\bigr)\leq \lceil n/2\rceil$.

On the other hand, the set 
\[
\{e, h^{-1}g,(h^{-1}g)^2,\ldots,(h^{-1}g)^{n-2}\}
\] 
is independent, 
since for any $0\leq i<j\leq n-2$ we have $(h^{-1}g)^{j-i}\notin\partial A$, 
and hence $(h^{-1}g)^j$ is not adjacent to $(h^{-1}g)^i$ in $\Cay(G,\partial A)$.
Consequently, the size of the largest independent set in $\Cay(G,\partial A)$ is at least $n-1$, that is, $\alpha\bigl(\Cay(G,\partial A)\bigr)\geq n-1$.
Since $n-1>\lceil n/2\rceil$ for all $n>3$,  we conclude that 
\begin{equation*}
    \alpha\bigl(\Cay(G,\partial A)\bigr) > i\bigl(\Cay(G,\partial A)\bigr). \qedhere
\end{equation*}
\end{proof}

To conclude this section, we present a general statement 
that reduces the computation of the independent domination number and the independence number 
to graphs of smaller order.

\begin{lemma}\label{lemma:vertex-transitive} 
Let $\Gamma$ be a vertex-transitive graph, 
let $v$ be an arbitrary vertex, 
and let $X$ be the induced subgraph on $V(\Gamma)\setminus N[v]$.
Then 
\[
i(\Gamma) = i(X) + 1 \quad \text{and} \quad \alpha(\Gamma) = \alpha(X) + 1.
\]
In particular, these formulas hold for any Cayley graph.
\end{lemma}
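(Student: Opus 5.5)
The plan is to set up a size-preserving correspondence between maximal independent sets of $\Gamma$ that contain $v$ and maximal independent sets of $X$. Vertex-transitivity is then used to show that the extremal values $i(\Gamma)$ and $\alpha(\Gamma)$ are attained by sets containing $v$.

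\emph{The correspondence.} Let $I$ be a maximal independent set of $\Gamma$ with $v\in I$. Then $I\setminus\{v\}$ avoids $N[v]$, so it lies in $V(X)$ and is independent in $X$. It is also maximal in $X$. Indeed, any $w\in V(X)\setminus I$ has a neighbour in $I$ by maximality of $I$ in $\Gamma$. That neighbour cannot be $v$, because $w\notin N[v]$, so it lies in $I\setminus\{v\}$.

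Conversely, let $J$ be a maximal independent set of $X$. Then $J\cup\{v\}$ is independent in $\Gamma$, since $v$ has no neighbours in $V(X)$. It is also maximal. A vertex of $N(v)$ is adjacent to $v$. A vertex of $V(X)\setminus J$ is adjacent to some vertex of $J$ by maximality of $J$ in $X$.

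These two maps are mutually inverse. Hence the sizes of maximal independent sets of $\Gamma$ containing $v$ are exactly the numbers $|J|+1$, where $J$ ranges over the maximal independent sets of $X$. If $X$ is empty, the only such $J$ is $\varnothing$ and we set $i(X)=\alpha(X)=0$.

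\emph{Using vertex-transitivity.} Let $I_0$ be a smallest maximal independent set of $\Gamma$, and pick any $u\in I_0$. Choose an automorphism $\varphi$ with $\varphi(u)=v$. Then $\varphi(I_0)$ is a maximal independent set of the same size that contains $v$. So $i(\Gamma)$ equals the minimum over maximal independent sets containing $v$, which by the correspondence is $i(X)+1$. The same argument applied to a maximum independent set gives $\alpha(\Gamma)=\alpha(X)+1$.

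\emph{Cayley graphs.} For $\Cay(G,S)$, right multiplication $x\mapsto xg$ is an automorphism, since $(vg)(ug)^{-1}=vu^{-1}$. These maps act transitively on $G$, so every Cayley graph is vertex-transitive.

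The argument is routine throughout. The only points needing care are that maximality transfers in both directions, which depends on $V(X)$ being exactly the complement of $N[v]$, and the degenerate case $X=\varnothing$.
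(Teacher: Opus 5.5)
Your proof is correct and follows the same route as the paper: you use vertex-transitivity to move an extremal maximal independent set onto one containing $v$, delete $v$ to get a maximal independent set of $X$, and get vertex-transitivity of Cayley graphs from right multiplication. Your explicit converse, that $J\cup\{v\}$ is maximal in $\Gamma$ for every maximal independent set $J$ of $X$, supplies the step the paper leaves implicit when it writes $|I'|=i(X)$ (and likewise for $\alpha$), so your version is the more complete of the two.
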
 

\begin{proof}
A graph is vertex-transitive if, for any two vertices $x$ and $y$, 
there exists an automorphism mapping $x$ to $y$. 
Let $I$ be a maximal independent set, and let $x\in I$. 
Then there exists an automorphism $f$ of $\Gamma$ such that $f(x)=v$.
Consequently, $f(I)$ is also a maximal independent set and $|f(I)|=|I|$.
Thus, without loss of generality, we may assume that $v\in I$.
Furthermore, if $I'=I\setminus\{v\}$, then $I'\cap N[v] = \varnothing$, 
and hence $I' \subset V(X)$. Moreover, $I'$ is a maximal independent set in $X$.
This proves the claimed equalities.
Indeed, if $I$ is a smallest maximal independent set, then 
\[
i(\Gamma) = |I| = |I'| + 1 = i(X) + 1.
\] 
Similarly, we obtain the equality $\alpha(\Gamma) = \alpha(X) + 1$.
The ``in particular'' statement follows because every Cayley graph is vertex-transitive: 
right multiplication by a group element is an automorphism (for left Cayley graphs).
\end{proof}

\section{Dihedral groups}
\label{section:Dihedral}

Here, we compute the independence number and independent domination number 
for certain Cayley graphs of dihedral groups $D_n$, where $n\geq3$.
Recall that $D_n$ has order $2n$ and can be defined by the presentation
\[
D _n=\left\langle a,b\mid a^2=e,b^2=e,(ab)^{n}=e\right\rangle.
\]
A few words about notation. Since in Sections~\ref{section:Order 16} and~\ref{section:gap}
we will refer to the computer algebra system \GAP \cite{GAP}, 
we briefly explain the \GAP notation we use.
In \GAP\unskip, the dihedral group of order $m$ (with $m$ even) 
is constructed as \verb"DihedralGroup(m)",
and its structure is typically reported as the string \verb"Dm" (e.g., \verb"D8" or \verb"D16").
Throughout this note, however, $D_n$ denotes the dihedral group of order $2n$.
Thus, \verb"DihedralGroup(2n)" in \GAP corresponds to our $D_n$
(e.g., \verb"D8" corresponds to $D_4$, and \verb"D16" corresponds to $D_8$).

\begin{lemma}
\label{lemma:indices Dn}
Let $D_n=\langle a,b\rangle$ be a dihedral group, where $a^2=b^2=(ab)^n=e$ and $n\geq3$.
If $S=\partial\{e,a,b\}$, then
\begin{align}
  i\bigl(\Cay(D_n,S)\bigr)=\lceil2n/5\rceil,          \label{eq:IDN dihedral group}   \\
  \alpha\bigl(\Cay(D_n,S)\bigr)=\lfloor2n/3\rfloor.   \label{eq:IN dihedral group}
\end{align}
\end{lemma}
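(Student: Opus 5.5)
The plan is to identify $\Cay(D_n,S)$ explicitly as the square of a cycle of length $2n$. After that, both numbers come from a counting argument on "gaps" between chosen vertices around the cycle.

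\emph{Step 1 (the connection set).} Since $a^{-1}=a$ and $b^{-1}=b$, with $A=\{e,a,b\}$ we get $A^{-1}A=\{e,a,b,ab,ba\}$. Hence $S=\{a,b,ab,ba\}$, which is symmetric because $(ab)^{-1}=ba$. For $n\geq3$ these four elements are distinct, so $\Cay(D_n,S)$ is $4$-regular.

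\emph{Step 2 (identification with $C_{2n}^2$).} Define $x_0=e$ and $x_{k+1}=a x_k$ for $k$ even, $x_{k+1}=b x_k$ for $k$ odd, with indices taken mod $2n$. Because $(ab)^n=e$ and $ab$ has order exactly $n$, the elements $x_0,\dots,x_{2n-1}$ are all distinct. They exhaust $D_n$: the even-indexed ones form $\langle ab\rangle$ and the odd-indexed ones form the coset $a\langle ab\rangle$. Moreover $x_{2n}=x_0$, so the edges from $a$ and $b$ form a Hamiltonian cycle $x_0x_1\cdots x_{2n-1}$. Next, $x_{k+2}$ equals $ba\,x_k$ or $ab\,x_k$ depending on the parity of $k$, so the elements $ab,ba\in S$ join exactly the vertices at cyclic distance $2$. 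Counting degrees ($4$ on each side) shows there are no other edges. Hence $\Cay(D_n,S)\cong C_m^2$ with $m=2n\geq6$: the vertices are $\mathbb{Z}_m$, and $u\sim v$ if and only if $u-v\in\{\pm1,\pm2\}$.

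\emph{Step 3 (gap counting in $C_m^2$).} Let $I=\{p_1<\dots<p_k\}\subset\mathbb{Z}_m$ and consider the cyclic gaps $d_j=p_{j+1}-p_j$, whose sum is $m$. Then $I$ is independent exactly when every $d_j\geq3$. An independent $I$ is maximal (that is, dominating) exactly when every $d_j\leq5$. The reason is that $N[p]=\{p-2,\dots,p+2\}$, so a vertex strictly between $p_j$ and $p_{j+1}$ is undominated if and only if the gap is at least $6$. Consequently, maximal independent sets of size $k$ exist if and only if there are $d_1,\dots,d_k\in\{3,4,5\}$ with $\sum d_j=m$, i.e.\ if and only if $3k\leq m\leq5k$. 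The smallest such $k$ is $\lceil m/5\rceil$ and the largest is $\lfloor m/3\rfloor$. It remains to check $3\lceil m/5\rceil\leq m$ and $m\leq 5\lfloor m/3\rfloor$ for all $m\geq6$. This is an elementary check on residues; for instance $m=6,7,8$ give $k=2$ in both cases. With $m=2n$ this yields $i=\lceil 2n/5\rceil$ and $\alpha=\lfloor 2n/3\rfloor$, which proves \eqref{eq:IDN dihedral group} and \eqref{eq:IN dihedral group}.

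The main obstacle is Step 2: verifying that the alternating walk by $a$ and $b$ closes up only after $2n$ steps, and that the products $ab,ba$ give exactly the distance-$2$ chords, with the correct orientation of left multiplication. In the small case $n=3$, $C_6^2$ is $K_6$ minus a perfect matching, and there the argument still applies directly. Once the graph is identified, Step 3 is routine.
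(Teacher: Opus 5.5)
Your proof is correct. Its core matches the paper's: you identify $\Cay(D_n,S)$ with the square of the $2n$-cycle, labelling vertices along the Hamiltonian cycle built from $a$ and $b$, with $ab,ba$ giving the distance-$2$ chords and $4$-regularity ruling out any other edges. Your bound $\alpha\le\lfloor 2n/3\rfloor$ is also the same gaps-$\ge 3$ count the paper uses.

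The difference is in how you handle $i$.
\begin{itemize}
\item The paper gets $i\ge\lceil 2n/5\rceil$ from the general degree inequality $i(\Gamma)\ge\lceil|\Gamma|/(\Delta+1)\rceil$. It then builds a maximal independent set of that size explicitly, in three cases according to the residue of $2n$ modulo $5$. For $\alpha$ it exhibits the separate set $\{0,3,\dots,3(q-1)\}$.
\item You instead characterize every maximal independent set of $C_m^2$ at once, as a cyclic composition of $m$ into parts from $\{3,4,5\}$. Both extremes then follow from the single condition $3k\le m\le 5k$.
\end{itemize}

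Your route is more uniform and proves strictly more: the sizes of maximal independent sets fill the whole integer interval $[\lceil m/5\rceil,\lfloor m/3\rfloor]$. It does rest on one observation you leave implicit in Step 3. A vertex strictly between consecutive chosen points $p_j,p_{j+1}$ can be dominated only by $p_j$ or $p_{j+1}$, because any window $\{v-2,\dots,v+2\}$ reaching another $p_l$ must already contain $p_j$ or $p_{j+1}$. This is worth stating.

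Your final nonemptiness check can be closed without residue casework. For $m\ge 8$ the interval $[m/5,m/3]$ has length $2m/15>1$, so it contains an integer. Since $m=2n$ is even, the only remaining case is $m=6$, which you handle directly.
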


\begin{proof}
It is straightforward to check that $S = \{a,b,ab,ba\}$. 
We also use the following known inequality 
(see, e.g., \cite[p.~278]{Berge}), 
which holds for any simple graph $\Gamma$ with maximum degree $\Delta$:
\begin{equation}\label{ineq:IDN Berge}
    i(\Gamma)\geq\left\lceil\frac{|\Gamma|}{\Delta+1}\right\rceil.
\end{equation}
In our case, $|\Gamma|=2n$ and $\Delta=4$.
Hence, to prove \eqref{eq:IDN dihedral group}, 
it suffices to construct a maximal independent set of size $\lceil 2n/5 \rceil$.

Observe that the Cayley graph $\Cay(D_n,S)$ has a Hamiltonian cycle
\[
P = (e,\, b,\, ab,\, bab,\, \ldots,\, (ab)^{n-1},\, b(ab)^{n-1},\,e).
\]
Label the vertices by the integers $0,1,\ldots,2n-1$ in the order in which they appear on this cycle. 
We will identify each vertex with its label. 
The graph has two additional cycles of length $n$, namely
\[
e,\ ab,\ (ab)^2,\ \ldots,\ (ab)^{n-1},\ e
\quad\text{and}\quad
b,\ (ba)b,\ (ba)^2b,\ \ldots,\ (ba)^{n-1}b\ b.
\]
The vertices of the first cycle have even labels along $P$, 
while the vertices of the second cycle have odd labels.
Moreover, successive vertices in the first cycle are obtained by left multiplication by $ab\in S$, 
and successive vertices in the second cycle by left multiplication by $ba\in S$.
Since $\Cay(D_n,S)$ is $4$-regular (as $|S|=4$), 
and with the above labeling along $P$,
it follows that two distinct vertices 
$i$ and $j$ are adjacent if and only if 
their distance along the cycle $P$ is at most $2$, or equivalently,
\[
i-j\equiv\pm1\smod{2n}\quad\text{or}\quad i-j\equiv\pm2\smod{2n}.
\]
Figure~\ref{fig:Cayley graphs of dihedral groups} shows the graphs 
$\Cay(D_4,S)$, $\Cay(D_5,S)$, and $\Cay(D_6,S)$.

\begin{figure}[ht]
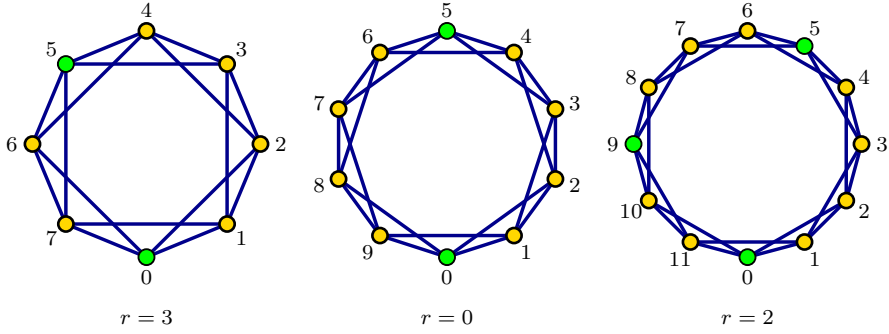

    \centering
    \DihedralGraph{4}{15mm}{1mm}    
    \DihedralGraph{5}{15mm}{1mm}    
    \DihedralGraph{6}{15mm}{1mm}
    \caption{Graphs $\Cay(D_4,S)$, $\Cay(D_5,S)$, $\Cay(D_6,S)$}
    \label{fig:Cayley graphs of dihedral groups}
\end{figure}
Write $2n = 5q + r$, where $q$ and $r$ are integers with $0 \le r \le 4$. 
Depending on the value of $r$, define the set $I$ as follows:
\begin{equation*}
\renewcommand{\arraystretch}{1.3}
I=
\left\{
  \begin{array}{ll}
    \{5k\mid k=0,1,\ldots,q-1\},             & \text{if}\ r=0; \\
    \{5k\mid k=0,1,\ldots,q-1\}\cup\{2n-3\}, & \text{if}\ r=1,2;\\
    \{5k\mid k=0,1,\ldots,q\},               & \text{if}\ r=3,4.
  \end{array}
\right.
\end{equation*}

\smallskip\noindent
\textbf{Claim 1:} $I$ is an independent set.  
Indeed, suppose $5i \sim 5j$. Then
\begin{equation}\label{eq:5j-5i equiv 1 or 2 mod 2n}
    5j-5i\equiv\pm1,\pm2\smod{2n}. 
\end{equation}
Without loss of generality, assume $q\geq j>i\geq0$. 
Then $0<5(j-i)\leq2n$. 
By the definition of $I$, we have $j-i\leq q$ (and in fact $j-i\leq q-1$ unless $r\geq3$), hence $5(j-i)\neq 2n$;
and if $5(j-i)<2n$, then $5(j-i)$ cannot be $\pm1$ or $\pm2$.

It remains to verify that for $r\in\{1,2\}$ the vertex $2n-3$ is not adjacent to any $5i$ 
with $0\leq i<q$. Suppose, to the contrary, that $2n-3\sim5i$. Then
\[
(2n-3)-5i=\pm1,\pm2\smod{2n}.
\]
Since $0<(2n-3)-5i<2n$, we get 
\[
  2n-3-5i=5(q-i)+r-3=1\ \text{or}\ 2.
\]
This implies $r\equiv4\smod{5}$ or $r\equiv0\smod{5}$, 
contradicting $r\in\{1,2\}$. Hence $I$ is independent in all cases.

\smallskip\noindent
\textbf{Claim 2:} $I$ is a maximal independent set.
Let $m$ be any integer with $0 \leq m < 2n$. 
If there exists $i$ (where $0 \leq i < q$) such that $5i < m < 5(i+1)$, 
then either $0 < 5(i+1) - m \leq 2$ or $0 < m - 5i \leq 2$, 
implying that $m \sim 5(i+1)$ or $m \sim 5i$. 
If no such $i$ exists, then $m > 5(q-1)$. 
We now consider the three cases:
\begin{itemize}[nosep]
  \item If $r=0$, then $m\sim5(q-1)$ or $m\sim0$;         
  \item If $r=3,\;4$, then $m\sim5(q-1)$, or $m\sim5q$, or $m\sim0$;  
  \item If $r=1,\;2$, then $m\sim5(q-1)$, or $m\sim2n-3$, or $m\sim0$.
\end{itemize}
(In Figure~\ref{fig:Cayley graphs of dihedral groups}, the vertices in $I$ are shown in green.)  
Thus, in all cases, $I$ is a maximal independent set and
$|I|=\lceil2n/5\rceil$.
This proves \eqref{eq:IDN dihedral group}.

We now prove \eqref{eq:IN dihedral group}. 
We have seen that if two vertices of $\Cay(D_n,S)$ are not adjacent, 
then their distance along the Hamiltonian cycle $P$ is at least $3$. 
Write $2n = 3q + r$ with $0 \leq r \leq 2$.
Assume for a contradiction that there exists an independent set of size $q+1$.
Let
$
I=\{0,i_1,\ldots,i_q\}
$
be an independent set of vertices in $\{0,1,2,\ldots,2n-1\}$
with $0<i_1<i_2<\ldots<i_q$.
Since $0$ and $2n$ represent the same vertex, we obtain the chain of inequalities
\begin{align*}
i_1\geq3,\\
i_2-i_1\geq3,\\
\ldots,\\
i_{q-1}-i_{q-2}\geq3,\\
i_q-i_{q-1}\geq3,\\
2n-i_q\geq3.
\end{align*}
Summing these gives $2n \geq 3q + 3$, which contradicts $2n = 3q + r$ with $r \leq 2$. 
Hence every independent set has at most $q = \lfloor 2n/3 \rfloor$ vertices. 
On the other hand, it is clear that
$\{0,3,\ldots,3(q-1)\}$ is an independent set of size $q$. 
This establishes \eqref{eq:IN dihedral group}.
\end{proof}

\section{Non-abelian groups of orders 21 and 27}
\label{section:group of order 21 and 27}

This section determines the independence number and the independent 
domination number for selected Cayley graphs of the non-abelian groups 
$C_7\rtimes C_3$ and $UT(3,\mathbb{F}_3)$.

\begin{lemma}
\label{lemma:indices of group of order 21}
Let
\[
G=\langle a,b\mid a^7=b^3=e,\;bab^{-1}=a^2\rangle\quad\text{and}\quad 
S=\partial\{e,a,b\}.
\]
Then
\begin{align}
  i\bigl(\Cay(G,S)\bigr)=3,                   \label{eq:IDN 21}   \\
  \alpha\bigl(\Cay(G,S)\bigr)=6.              \label{eq:IN 21}
\end{align}
\end{lemma}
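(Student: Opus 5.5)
By Lemma~\ref{lemma:The relationship between $s$-factors and Cayley graphs} only the graph $\Gamma=\Cay(G,S)$ matters, and I would first make it explicit. Here $A=\{e,a,b\}$, so
\[
S=\{a,a^{-1},b,b^{-1},a^{-1}b,b^{-1}a\}.
\]
These six elements are distinct, so $\Gamma$ is $6$-regular on $21$ vertices. I would write every element in the normal form $a^ib^j$ ($0\le i<7$, $0\le j<3$), using $ba=a^2b$. Left multiplication by each element of $S$ then becomes an explicit map on pairs $(i,j)$, which makes adjacency $v u^{-1}\in S$ easy to check by hand.

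\emph{The equality $i(\Gamma)=3$.} The lower bound comes from the inequality \eqref{ineq:IDN Berge} used in Lemma~\ref{lemma:indices Dn}: $i(\Gamma)\ge\lceil 21/7\rceil=3$. For the upper bound I need three vertices whose closed neighbourhoods $N[v]=(A^{-1}A)v$ cover $G$. Since $3\cdot 7=21$, such a set is an efficient dominating set (a perfect code). Concretely, I look for $B=\{e,x,y\}$ such that the translates $(A^{-1}A)$, $(A^{-1}A)x$, $(A^{-1}A)y$ are pairwise disjoint, where $A^{-1}A=\{e,a,a^{-1},b,b^{-1},a^{-1}b,b^{-1}a\}$. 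Disjointness forces independence, and the count forces domination. I would find $x,y$ by a short search in normal form, trying $x$, $y$ in the cosets $\langle a\rangle b$ and $\langle a\rangle b^2$, and then display the resulting partition of $G$ into three blocks of size $7$.

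\emph{The equality $\alpha(\Gamma)=6$.} For the lower bound I would exhibit six pairwise non-adjacent vertices, again found in normal form and checked against the table of $S$. For the upper bound I use cliques. For any $x$, the set $A^{-1}x=\{x,a^{-1}x,b^{-1}x\}$ is a triangle, because $(a^{-1}x)(x)^{-1}=a^{-1}$, $(b^{-1}x)x^{-1}=b^{-1}$ and $(b^{-1}x)(a^{-1}x)^{-1}=b^{-1}a$ all lie in $S$. Covering $G$ by seven such triangles only gives $\alpha\le 7$, so I would refine this with Lemma~\ref{lemma:vertex-transitive}. Let $X$ be the subgraph induced on the $14$ vertices of $G\setminus N[e]$; then $\alpha(\Gamma)=\alpha(X)+1$. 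It then suffices to cover $V(X)$ by five cliques, for instance four triangles of the form $A^{-1}x$ (or their right-handed variants) lying inside $V(X)$, together with one edge. That cover gives $\alpha(X)\le 5$, hence $\alpha(\Gamma)\le 6$.

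The main obstacle is this last step: finding such a clique cover of $X$, if one exists. If no clique cover of $X$ by five cliques exists, I would fall back on a direct case analysis inside $X$. I would suppose $I\subset V(X)$ is independent of size $6$ and split according to how $I$ meets the three cosets $\langle a\rangle$, $\langle a\rangle b$, $\langle a\rangle b^2$. Within $\langle a\rangle$ the edges by $a^{\pm1}$ form a $7$-cycle, which bounds each part by $3$, and the cross edges $b^{\pm1}$, $a^{-1}b$, $b^{-1}a$ then rule out the remaining configurations. This analysis could also be confirmed with the \GAP code of Section~\ref{section:gap}.
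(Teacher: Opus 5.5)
Your treatment of $i(\Gamma)=3$, of $\alpha(\Gamma)\ge 6$, and of the reduction $\alpha(\Gamma)=\alpha(X)+1$ via Lemma~\ref{lemma:vertex-transitive} matches the paper. The sets you intend to find do exist: the paper's $\{e,ba^5,b^2a^4\}$ is indeed a perfect code, and $\{e,a^4,ba,ba^6,b^2a^2,b^2a^3\}$ is independent.

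\textbf{The gap.} It is the bound $\alpha(X)\le 5$, the only step with real content, and the cover you propose for it does not exist.
\begin{enumerate}
\item Inside $X$, the only triangle of the form $A^{-1}x$ through $ba^4$ or $b^2a^2$ is $A^{-1}(b^2a^4)=\{b^2a^4,b^2a^2,ba^4\}$. The only one through $ba^6$ or $b^2a^6$ is $A^{-1}(b^2a^6)=\{b^2a^6,b^2a^4,ba^6\}$. These meet in $b^2a^4$, so one of the two pairs must be your single edge.
\item Then $ba$ and $ba^2$ must both lie in chosen triangles. The only $A^{-1}x$-triangles of $X$ through them are $A^{-1}(ba^5)=\{ba^5,ba,a^5\}$ and $A^{-1}(ba^2)=\{ba^2,ba^5,a^2\}$, which meet in $ba^5$.
\item The same two overlaps rule out a cover by five such triangles.
\item Your hedge about right-handed variants does not help. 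Translates $xA^{-1}$ are not cliques in the left Cayley graph; for example $(ba^{-1})b^{-1}=a^{-2}\notin S$.
\end{enumerate}
The fallback case analysis is only a sketch. The three cosets of $\langle a\rangle$ meet $V(X)$ in a $4$-vertex path, a $5$-vertex path, and a $3$-vertex path plus a disjoint edge. So the per-coset bounds give only $2+3+3=8$, and you do not say how the cross edges bring this down to $5$.

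\textbf{The repair.} $A^{-1}A$ contains two subgroups of order $3$: $\langle b\rangle=\{e,b,b^{-1}\}$ and $\langle a^{-1}b\rangle=\{e,a^{-1}b,b^{-1}a\}$, since $(a^{-1}b)^2=b^{-1}a$. Hence every right coset of either subgroup is also a triangle. With these, the five cliques
\[
\langle b\rangle a^2,\qquad \langle b\rangle a^4,\qquad \langle a^{-1}b\rangle a^5=\{a^5,ba,b^2a^6\},
\]
\[
A^{-1}(b^2a^5)=\{b^2a^5,b^2a^3,ba^5\},\qquad \{a^3,ba^6\}
\]
partition the $14$ vertices of $X$. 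The last pair is an edge because $ba^6\cdot a^{-3}=ba^3=a^{-1}b\in S$. So $\alpha(X)\le 5$ and your argument closes.

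\textbf{Comparison with the paper.} Once repaired, your route differs from the paper's only at this step. The paper exhibits a $12$-cycle in $X$, argues that an independent $6$-set would have to lie on it and alternate, and excludes both alternating classes using extra edges read off a figure. A clique cover is instead a compact certificate that can be checked line by line without a picture.
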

\begin{proof}
Note that $G$ is a semidirect product of a cyclic group of order $7$ 
by a cyclic group of order $3$.
Since 
$S=\{a,a^{-1},b,b^{-1},a^{-1}b,b^{-1}a\}$ and $|S|=6$,
applying \eqref{ineq:IDN Berge} yields $i\bigl(\Cay(G,S)\bigr)\geq3$.
On the other hand, consider the set
$
    I=\{e,ba^5,b^2a^4\}.
$
It is easy to check that $I$ is a maximal independent set of size three.
Hence, $i\bigl(\Cay(G,S)\bigr)=3$.

\begin{figure}[ht]
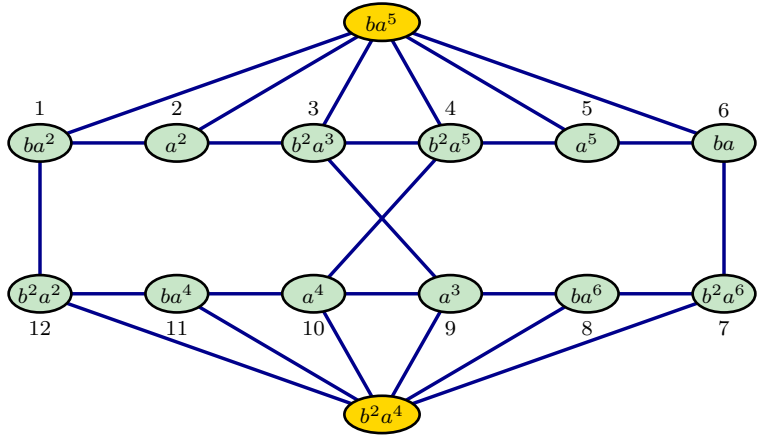

    \centering
    \CayLeyGraphTwentyOne{21}{18mm}{3.3mm}{}
    \caption
    {The subgraph of $\Cay(G,S)$ with vertex set $G\setminus N[e]$, $|G|=21$}
    \label{fig:Cayley Graph Twenty-One}
\end{figure}

We now prove \eqref{eq:IN 21}.
Since $a^{-1}=a^6$, $b^{-1}=b^2$, and
\[
a^{2i}b=ba^i,\;a^{4i}b^2=b^2a^i,\;i=1,\ldots,6,
\]
we have
\[
  N(e)=\{a,\; a^6,\; b,\; ba^3,\; b^2,\; b^2a\}.  
\]
We have listed these elements so that those from the subgroup 
$H=\langle a\rangle$ appear first, followed by those from $bH$ and then $b^2H$.

Let $X$ be the subgraph of $\Cay(G,S)$ with vertex set $G\setminus N[e]$
and edge set consisting of the edges shown in Figure~\ref{fig:Cayley Graph Twenty-One}.  
It contains a cycle $P$ of length $12$, whose vertices are colored green there.  
If $X$ had an independent set $J$ of size $6$, then $J\subset V(P)$, 
implying that every second vertex on $P$ belongs to $J$.
By examining the two cases 
(where, under a labeling of the vertices of $P$, the vertices in $J$ are 
either all even-numbered or all odd-numbered) 
in Figure~\ref{fig:Cayley Graph Twenty-One},
one sees that this is impossible. Thus, $\alpha(X)\leq5$. 
Note that if $Y$ is the induced subgraph on $G\setminus N[e]$, then $X$ is a subgraph of $Y$ and
hence $\alpha(Y)\leq\alpha(X)\leq5$.
Therefore, Lemma~\ref{lemma:vertex-transitive} implies that $\alpha\bigl(\Cay(G,S)\bigr)\leq6$.

On the other hand, $\Cay(G,S)$ does have an independent set of six vertices: 
\[
\{e,\; a^4,\; ba,\; ba^6,\; b^2a^2,\; b^2a^3\}.
\]
We conclude that $\alpha\bigl(\Cay(G,S)\bigr) = 6$.
\end{proof}

\begin{lemma}
\label{lemma:IDN and IN of group of order 27}
Let
\[
G=\langle a,\; b,\; c\mid a^3=b^3=c^3=e,\; b^{-1}a^{-1}ba=c,\;ac=ca,\; bc=cb\rangle.
\]
If $S=\partial\{e,a,b,c\}$,
then
\begin{align}
  i\bigl(\Cay(G,S)\bigr)=3,                   \label{eq:IDN 27-ii}   \\
  \alpha\bigl(\Cay(G,S)\bigr)=6.              \label{eq:IN 27-ii}
\end{align}
\end{lemma}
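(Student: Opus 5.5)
The plan mirrors the proof of Lemma~\ref{lemma:indices of group of order 21}. First I compute $S$ explicitly. With $A=\{e,a,b,c\}$, the set $A^{-1}A\setminus\{e\}$ consists of the twelve elements
\[
a^{\pm1},\; b^{\pm1},\; c^{\pm1},\; a^{-1}b,\; b^{-1}a,\; a^{-1}c,\; c^{-1}a,\; b^{-1}c,\; c^{-1}b .
\]
I will check that they are pairwise distinct by writing every element of $G$ in the normal form $a^ib^jc^k$, using that $c$ is central and that $ba=abc^{-1}$, which follows from $b^{-1}a^{-1}ba=c$. So $\Cay(G,S)$ is $12$-regular on $27$ vertices.

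The inequality \eqref{ineq:IDN Berge} then gives $i\bigl(\Cay(G,S)\bigr)\geq\lceil 27/13\rceil=3$. For the matching upper bound I will exhibit a maximal independent set of size three. I would search for it in the form $\{e,x,y\}$ with $x$ and $y$ taken from the two cosets of the normal subgroup $\langle a,c\rangle$ other than $\langle a,c\rangle$ itself, i.e.\ from $b\langle a,c\rangle$ and $b^2\langle a,c\rangle$. This is analogous to $\{e,ba^5,b^2a^4\}$ in the order-$21$ case. Since $|N[\cdot]|=13$, three closed neighbourhoods must cover all $27$ vertices, so the overlap is only $12$. That makes the covering condition tight, so the choice of $x,y$ has to be made carefully.

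For \eqref{eq:IN 27-ii}, the lower bound is again an explicit independent set of six elements, found by a small search and checked pairwise against the list of $S$. For the upper bound I apply Lemma~\ref{lemma:vertex-transitive} with $v=e$. The graph $X$ induced on $G\setminus N[e]$ has $27-13=14$ vertices, and it suffices to show $\alpha(X)\leq5$. I will list these $14$ vertices in normal form, sorted by their coset of $\langle a,c\rangle$ (or of $Z=\langle c\rangle$), and compute the edges among them.

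Each coset $Zg$ is a triangle, because $c^{\pm1}\in S$. The natural way to bound $\alpha(X)$ is therefore to cover $V(X)$ by a few cliques and odd cycles: a clique contributes at most $1$ to an independent set, and an odd cycle $C_{2m+1}$ contributes at most $m$. For instance, a partition of the $14$ vertices into structures whose contributions sum to $5$ would finish the proof. As a fallback, I would copy the order-$21$ argument: find a long cycle $P$ in $X$ containing enough of the vertices that an independent set of size $6$ would have to alternate along $P$, and then rule out the two alternating classes directly.

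The main obstacle is this last step, finding a clean certificate that $\alpha(X)\leq5$. The edge structure of $X$ must be worked out by hand from the twelve generators, and a bad choice of cover will only give the weaker bound $6$. If no neat cover appears, I would fall back on a short case analysis on how many vertices of an independent set lie in each $Z$-coset: each coset meets $X$ in at most three vertices, and any independent set contains at most one vertex from each coset.
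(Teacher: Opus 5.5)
Your skeleton is the paper's: the Berge bound plus an explicit independent dominating triple for $i$, then an explicit independent $6$-set plus Lemma~\ref{lemma:vertex-transitive}, which reduces the upper bound to $\alpha(X)\le 5$ on the $14$-vertex graph $X$ induced on $G\setminus N[e]$. Your cycle fallback is exactly what the paper does: a $12$-cycle in $X$ whose two chords $\{ac,ab^2c\}$ and $\{ab^2,abc^2\}$ rule out both alternating classes. As written, though, the proposal does not yet prove the lemma, for two concrete reasons.

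\emph{First, the relation is wrong.} From $b^{-1}a^{-1}ba=c$ you get $ba=abc$, not $abc^{-1}$. The final numbers would survive, since computing with $ba=abc^{-1}$ amounts to replacing $A$ by $\{e,a,b,c^{-1}\}$, which is the image of $A$ under the automorphism swapping $a$ and $b$. But the normal forms, edges and explicit sets you compute would not be certificates for the stated $A$.

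\emph{Second, the certificates are missing.} The entire content of this lemma is its three certificates, and you supply none of them. In particular, the step you flag as the main obstacle is left open. Your last fallback cannot close it cheaply: $X$ meets eight cosets of $Z=\langle c\rangle$ (all but $Z$ itself), so ``at most one vertex per $Z$-coset'' only gives $\alpha(X)\le 8$.

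The missing observation is to use cosets of $M=\langle a,c\rangle$ instead of $Z$. Since $S\cap M=M\setminus\langle ac\rangle$, each coset of $M$ induces $K_{3,3,3}$, whose parts are the cosets of $\langle ac\rangle$ it contains. With this, your primary plan works directly. In the normal form $a^ib^jc^k$ with $ba=abc$, the $14$ vertices of $X$ are partitioned into the five cliques
\[
\{ac,\,abc^2,\,ab^2\},\quad \{a^2c^2,\,a^2bc,\,a^2b^2\},\quad \{bc,\,ab,\,abc\},\quad \{a^2bc^2,\,ab^2c,\,a^2b^2c\},\quad \{b^2c^2,\,a^2b^2c^2\}.
\]
Each adjacency is a one-line check $uv^{-1}\in S$, so $\alpha(X)\le 5$. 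This is a figure-free certificate, easier to verify than the paper's cycle argument, whose claim that a $6$-set must lie on the cycle is read off the figure. For the other two certificates you can take the paper's $D=\{e,abc,a^2b^2\}$, which has exactly your proposed shape ($abc\in bM$, $a^2b^2\in b^2M$), and the independent set $\{e,ac,ab,a^2bc,b^2c^2,a^2b^2c\}$.
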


\begin{proof}
Although we do not use this fact, it is useful to note that
$G$ is isomorphic to the group of unitriangular matrices $UT(3,\mathbb{F}_3)$ 
over the field $\mathbb{F}_3$ with three elements.
The mapping defined by 
\[
a\mapsto t_{1,2}(1),\;
b\mapsto t_{2,3}(1), \text{ and } 
c \mapsto t_{1,3}(-1) 
\]
extends to an isomorphism $G\to UT(3,\mathbb{F}_3)$.
Here, $t_{i,j}(\sigma)$ denotes a transvection, that is
the identity matrix with its $(i,j)$-entry replaced by $\sigma$.

\begin{figure}[ht]
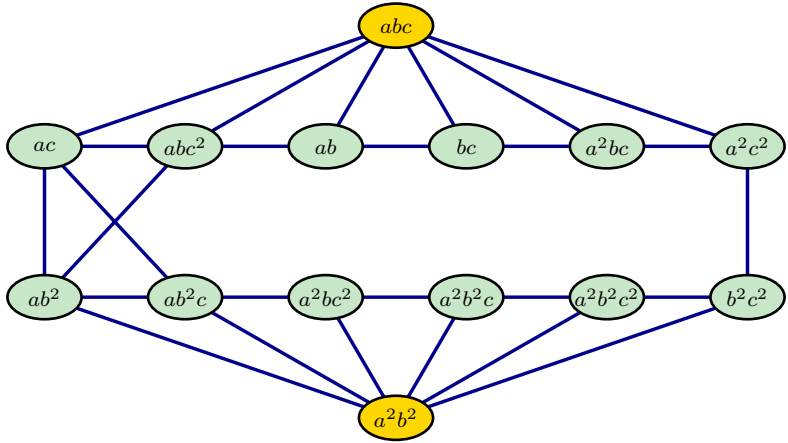

    \centering
    \SubgraphCayLeyGraphTwentySeven{14}{18.5mm}{3.9mm}{}
    \caption    
    {The subgraph of $\Cay(G,S)$ with vertex set $W=G\setminus N[e]$, where $|G|=27$}
    \label{fig:SubgraphCayley GraphTwentySeven ii}
\end{figure}
Using $ba=abc$ and the fact that $c\in Z(G)$, we obtain
\[
S=\{a,\; b,\;  c,\; a^2,\; b^2,\; c^2,\; a^2b,\; ab^2c^2,\; a^2c,\; ac^2,\; b^2c,\; bc^2\}.
\]
Let $W$ be the subgraph of $\Cay(G,S)$ with vertex set $G\setminus N[e]$ and 
edge set consisting of the edges shown
in Figure~\ref{fig:SubgraphCayley GraphTwentySeven ii}.
To verify that two vertices $u,v\in W$ are adjacent, 
it suffices to check that $uv^{-1} \in S$.  
For example, since 
\[
(abc)(a^{2}c^{2})^{-1} = (abc)(ca)=abac^{2} = a^{2}bc^3=a^2b \in S,
\] 
we have $abc \sim a^{2}c^{2}$.

Applying the bound \eqref{ineq:IDN Berge} with $\Delta=|S|=12$, we obtain
\[
i\bigl(\Cay(G,S)\bigr)\geq \left\lceil\frac{27}{12+1}\right\rceil = 3.
\]
One can verify that the set $D=\{e,\,abc,\,a^2b^2\}$ is independent in $\Cay(G,S)$.
Moreover, $abc$ and $a^2b^2$ dominate every vertex of $W=G\setminus N[e]$ in $\Cay(G,S)$, 
while $e$ dominates $N[e]$.
Hence, $D$ is an independent dominating set in $\Cay(G,S)$, and therefore
$i\bigl(\Cay(G,S)\bigr)\le 3$.
Combined with the lower bound above, this yields $i\bigl(\Cay(G,S)\bigr)=3$.

Next, it is also straightforward to check that the set of six vertices 
\[
\{e,\; ac,\; ab,\; a^2bc,\; b^2c^2,\; a^2b^2c\}
\] 
is independent in $\Cay(G,S)$.  
On the other hand, the subgraph $W$ does not contain an independent set of size $6$. 
Indeed, any such set would lie entirely on the $12$-cycle
(whose vertices are colored green in Figure~\ref{fig:SubgraphCayley GraphTwentySeven ii}),
but the two additional edges $\{ac, ab^2c\}$ and $\{ab^2, abc^2\}$ 
prevent an independent set of size $6$ on that cycle 
(which would have to consist of alternating vertices). 
By Lemma~\ref{lemma:vertex-transitive}, we therefore conclude that 
$\alpha\bigl(\Cay(G,S)\bigr) = 6$.
\end{proof}

\section{The groups $\mathbb{Z}_2^5$ and $\mathbb{Z}_3^3$}
\label{section:Direct}

In this section, we compute the independence number and the independent domination number 
for certain Cayley graphs of the elementary abelian groups 
$\mathbb{Z}_2^5$ and $\mathbb{Z}_3^3$.

\begin{lemma}[see \cite{HooshmandArani}]
\label{lemma:Z2^5}    
  In the group $\mathbb{Z}_2^5$, choose the set 
  \[
    A=\{0,e_1,e_2,e_3,e_4,e_5\},
  \]
  where $e_i$ is the unit vector of length $5$ 
  having a $1$ in the $i$-th position and $0$s elsewhere.    
  Then
  \[
  S=\{e_i \mid 1\leq i\leq 5\}\cup\{e_i+e_j \mid 1\leq i<j\leq5\},
  \]
  and
\begin{equation}\label{eq:IDN and IN Z2^5}
  i\bigl(\Cay(\mathbb{Z}_2^5,S)\bigr)=2, \quad \alpha\bigl(\Cay(\mathbb{Z}_2^5,S)\bigr)= 4.
\end{equation}  
\end{lemma}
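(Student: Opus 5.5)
The plan is to translate the statement into coding-theoretic language and then use Lemma~\ref{lemma:vertex-transitive} together with a direct count. First, the description of $S$ is immediate: $\partial A=A^{-1}A\setminus\{0\}=\{e_i-e_j\}\setminus\{0\}=\{e_i\}\cup\{e_i+e_j\mid i<j\}$, because $-1=1$ in $\mathbb{Z}_2$. Hence two vertices $u,v\in\mathbb{Z}_2^5$ are adjacent in $\Cay(\mathbb{Z}_2^5,S)$ if and only if their Hamming distance $d(u,v)$ is $1$ or $2$. So independent sets are exactly binary codes of length $5$ with minimum distance at least $3$. The graph is $15$-regular, since $|S|=5+10=15$.

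For $i$, apply \eqref{ineq:IDN Berge} with $|\Gamma|=32$ and $\Delta=15$; this gives $i\geq\lceil 32/16\rceil=2$. For the upper bound, take $I=\{0,\,e_1+\dots+e_5\}$. These two vertices are at distance $5$, so $I$ is independent. Moreover, $N[0]$ consists of all vectors of weight at most $2$, and $N[e_1+\dots+e_5]$ consists of all vectors of weight at least $3$. Thus $I$ dominates, and $i=2$.

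For $\alpha$, the lower bound comes from the explicit independent set $\{0,\;e_1+e_2+e_3,\;e_1+e_4+e_5,\;e_2+e_3+e_4+e_5\}$; one checks that all pairwise distances are at least $3$. For the upper bound, use Lemma~\ref{lemma:vertex-transitive} with $v=0$. Then $X$ is the induced subgraph on the $16$ vectors of weight $\geq3$, and it suffices to show $\alpha(X)\leq3$. Let $J\subset V(X)$ be independent and identify vectors with their supports. The distance computations are as follows.
\begin{itemize}
\item Two weight-$4$ vectors are at distance $2$, so $J$ contains at most one of them.
\item The all-ones vector is at distance $2$ from every weight-$3$ vector and at distance $1$ from every weight-$4$ vector. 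So if it lies in $J$, then $|J|=1$.
\item Two weight-$3$ sets $P,Q$ are at distance $6-2|P\cap Q|\in\{2,4\}$. Hence in $J$ they must meet in exactly one point $p$, and then $P\cup Q$ is the whole $5$-set.
\end{itemize}

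The key step, and the only real obstacle, is to show that $J$ contains at most two weight-$3$ vectors. Suppose a third one $R$ were present. Then $|R|=|R\cap P|+|R\cap Q|-[p\in R]=2-[p\in R]\leq 2$, contradicting $|R|=3$. Consequently $|J|\leq 2+1=3$, so $\alpha(X)\leq3$. Lemma~\ref{lemma:vertex-transitive} then gives $\alpha\bigl(\Cay(\mathbb{Z}_2^5,S)\bigr)\leq4$. Combined with the explicit set above, this yields \eqref{eq:IDN and IN Z2^5}.
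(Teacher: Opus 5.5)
Your proof is correct and follows essentially the same route as the paper: for $i$ you use the bound \eqref{ineq:IDN Berge} together with the independent dominating pair $\{0,e_1+\dots+e_5\}$, and for $\alpha$ you use the same explicit $4$-set plus a weight-class count (at most one weight-$4$ vector, the weight-$5$ vector excluding all others, at most two weight-$3$ vectors). The only differences are cosmetic: you reduce to $0\in J$ via Lemma~\ref{lemma:vertex-transitive} rather than by translation, and you exclude a third weight-$3$ vector using $P\cup Q=\{1,\dots,5\}$ to force $|R|\le 2$, where the paper applies inclusion--exclusion to get $|A\cup B\cup C|\ge 6$.
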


\begin{proof}
Let $G=\Cay(\mathbb{Z}_2^5,S)$.  Write each $x\in\mathbb{Z}_2^5$ uniquely as
$x=\sum_{i=1}^5 x_i e_i$ with $x_i\in\{0,1\}$, and define the Hamming weight
\[
\wt(x)=|\{i \mid x_i=1\}|.
\]
Since $S$ consists of all vectors of weight $1$ or $2$, two vertices $x,y$ are adjacent in $G$
if and only if $\wt(x+y)\in\{1,2\}$.

\smallskip
\noindent\textit{Independent domination.}
Since $|V(G)|=32$ and $\Delta=|S|=15$, inequality~\eqref{ineq:IDN Berge} gives
\[
i(G)\geq \left\lceil\frac{32}{15+1}\right\rceil=2.
\]
Let $u=e_1+e_2+e_3+e_4+e_5$ (so $\wt(u)=5$).
For any $v\in\mathbb{Z}_2^5$, either $\wt(v)\le 2$ or $\wt(u+v)=5-\wt(v)\le 2$.
Hence every vertex lies in $N[0]\cup N[u]$, so $\{0,u\}$ is a dominating set.
Therefore $i(G)\le 2$, and thus $i(G)=2$.

\smallskip
\noindent\textit{Independence number.}
Consider the set
\[
I=\{0,\ e_1+e_2+e_3,\ e_1+e_4+e_5,\ e_2+e_3+e_4+e_5\}.
\]
For any distinct $x,y\in I$, one checks that $\wt(x+y)\ge 3$, 
hence $x+y\notin S$ and $I$ is independent.
Thus $\alpha(G)\geq4$.

To prove that $\alpha(G)\leq4$, let $J$ be a maximum independent set in $G$.
By translation, we may assume that $0\in J$.
Then $\wt(x)\geq3$ for all $x\in J\setminus\{0\}$.

The unique vector of weight $5$ cannot belong to $J$ together with any other nonzero vertex,
since it is adjacent to every vector of weight $3$ or $4$.
Also, $J$ contains at most one vector of weight $4$, because any two distinct weight-$4$ vectors
differ in exactly two coordinates and hence are adjacent.

Finally, $J$ contains at most two vectors of weight $3$.
Indeed, if $A,B,C\subset\{1,\ldots,5\}$ are supports of three weight-$3$ vectors in $J$, then
independence implies $|A\cap B|\leq1$, $|A\cap C|\leq1$, $|B\cap C|\leq1$, 
and hence
\[
|A\cup B\cup C|
=9-|A\cap B|-|A\cap C|-|B\cap C|+|A\cap B\cap C|
\geq 9-3>5,
\]
a contradiction.

Thus $|J|\leq1+1+2=4$, so $\alpha(G)\leq4$, and therefore $\alpha(G)=4$.
\end{proof}

\begin{lemma}
\label{lemma:Z3^3}    
In the group $\mathbb{Z}_3^3$, define $A=\{0,e_1,e_2,e_3\}$, 
where $e_i$ denotes the $i$th standard basis vector. 
Then we have
\[
  S=\{\pm e_i,\ \pm(e_i-e_j)\mid 1\leq i\neq j\leq 3\}
\]
and  
\begin{equation}\label{eq:IDN and IN Z3^3}
  i\bigl(\Cay(\mathbb{Z}_3^3,S)\bigr)=3,\quad 
  \alpha\bigl(\Cay(\mathbb{Z}_3^3,S)\bigr)=4.
\end{equation}
\end{lemma}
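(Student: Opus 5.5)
The plan is to follow the pattern of Lemmas~\ref{lemma:Z2^5} and~\ref{lemma:IDN and IN of group of order 27}. We work in $\Gamma=\Cay(\mathbb{Z}_3^3,S)$, use the degree bound \eqref{ineq:IDN Berge} together with an explicit independent dominating set for $i(\Gamma)$, and use an explicit independent set together with Lemma~\ref{lemma:vertex-transitive} for $\alpha(\Gamma)$.

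\emph{The set $S$.} First, $A^{-1}A$ consists of the differences $a-b$ with $a,b\in A$. The differences $e_i-0$ and $0-e_i$ give $\pm e_i$, and the differences $e_i-e_j$ with $i\neq j$ give $\pm(e_i-e_j)$. Hence $S$ is as stated and $|S|=12$. It is convenient to describe $S\cup\{0\}$ intrinsically: it consists of the vectors with at most one nonzero coordinate, together with the vectors having exactly one coordinate $1$, one coordinate $-1$ and one coordinate $0$.

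\emph{Independent domination.} Inequality \eqref{ineq:IDN Berge} gives $i(\Gamma)\ge\lceil 27/13\rceil=3$. For the upper bound, let $u=(1,1,1)$ and take $D=\{0,u,2u\}$. Since $u\notin S$ and $S=-S$, the set $D$ is independent. To show that $D$ dominates, we show that every $x$ satisfies $x-cu\in S\cup\{0\}$ for some $c\in\mathbb{Z}_3$; that is, subtracting a suitable constant from all coordinates of $x$ lands in $S\cup\{0\}$. We split according to the pattern of coordinates of $x$.
\begin{itemize}[nosep]
\item If all three coordinates are equal, subtract that common value to obtain $0$.
\item If exactly two coordinates are equal, subtract their common value. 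The result has one nonzero coordinate, so it equals $\pm e_i$.
\item If all three coordinates are distinct, they are $0,1,2$ in some order. Subtracting the constant that sends the missing-value pattern to $\{0,1,-1\}$ gives some $e_i-e_j$.
\end{itemize}
Hence $i(\Gamma)=3$.

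\emph{Independence number.} By Lemma~\ref{lemma:vertex-transitive}, $\alpha(\Gamma)=\alpha(X)+1$, where $X$ is induced on the $14$ non-neighbours of $0$. By the description above, these fall into three families:
\begin{itemize}[nosep]
\item $\pm u$;
\item the six vectors $\pm(e_i+e_j)$ with $i<j$;
\item the six vectors $\pm(e_i+e_j-e_k)$ with $\{i,j,k\}=\{1,2,3\}$.
\end{itemize}
For the lower bound, we will exhibit one independent $4$-set containing $0$. This means finding three vectors among these $14$ whose pairwise differences again avoid $S\cup\{0\}$, that is, three pairwise nonadjacent vertices of $X$. A short search should produce one, for example starting with $e_1+e_2$ and $-(e_1+e_3)$, whose difference is $-e_1+e_2+e_3$, a non-neighbour of $0$.

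For the upper bound, we must show $\alpha(X)\le 3$. First we will compute the adjacency inside $X$ explicitly, as was done in Figures~\ref{fig:Cayley Graph Twenty-One} and~\ref{fig:SubgraphCayley GraphTwentySeven ii}. Then we will find a spanning structure of $X$ that forces the bound. The natural candidate is a partition of the $14$ vertices into few cliques (triangles and edges), or a long cycle plus chords, so that any $4$ independent vertices are impossible. The symmetry group $S_3$ permuting coordinates, together with $x\mapsto -x$, preserves $S$ and fixes $0$, so it acts on $X$; this should reduce the case analysis to a few orbit representatives. For instance, we can fix the possible number of vertices taken from each of the three families above and bound each count.

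The main obstacle is this last step, proving $\alpha(X)\le3$. If a clean clique cover of $X$ by $3$ cliques does not exist, the fallback is a case split on whether $u$ or $-u$ lies in the independent set, together with a direct check of the remaining $12$ vertices, which form two $S_3$-orbits of size $6$.
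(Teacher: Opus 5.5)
Your computation of $S$ and your proof of $i(\Gamma)=3$ are correct and match the paper's: the degree bound, plus the independent dominating set $\{000,111,222\}$ checked by coordinate pattern. In the all-distinct case, $x$ is already some $e_i-e_j$, so $c=0$ works. The lower bound $\alpha(\Gamma)\ge 4$ is only sketched, but your start completes. For example, $\{000,110,202,121\}$ is independent; the paper uses $\{000,110,211,022\}$.

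\textbf{The gap.} The upper bound $\alpha(X)\le 3$ is the heart of the lemma, and you leave it as a list of strategies.
\begin{itemize}
\item Your first choice cannot work. $X$ has clique number $4$: for instance, $u$ together with $110,101,011$ is a $K_4$, and one checks there is no $K_5$. So three cliques cover at most $12$ of the $14$ vertices.
\item Bounding the contribution of each of your three families also fails. There are no edges at all between the vectors $e_i+e_j$ and the vectors $-(e_k+e_l)$. Also, every $x\in V(X)$ is non-adjacent to $-x$. Hence each family can contribute two vertices.
\end{itemize}

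\textbf{The missing idea} is a finer decomposition. Take the orbits of coordinate permutations alone; these are not the two size-$6$ orbits you mention, which also need $x\mapsto -x$. The $12$ vertices other than $\pm u$ then split into four triangles:
$A=\{110,101,011\}$, $C=-A$, $B=\{112,121,211\}$, $D=-B$.
Moreover, $u$ is adjacent to every vertex of $A\cup B$, and $-u$ to every vertex of $C\cup D$.

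\textbf{The case analysis} then runs as follows. Let $J\subseteq V(X)$ be independent.
\begin{itemize}
\item If $u\in J$, then $J$ misses $A\cup B$. Either $-u\in J$, so $J=\{u,-u\}$, or $J$ has at most one vertex in each of $C$ and $D$. In both cases $|J|\le 3$, and likewise if $-u\in J$.
\item If $J$ avoids $\pm u$, then $|J|\le 4$, with equality only if $J$ meets all four triangles. By symmetry assume $110\in J$. The only non-neighbours of $110$ in $B\cup D$ are $121,211,221$, which form a triangle. So $J$ meets at most one of $B$ and $D$, and $|J|\le 3$.
\end{itemize}
This is exactly the paper's argument. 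Without it, or an equivalent explicit check, $\alpha(\Gamma)\le 4$ remains unproved.
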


\begin{proof}
Represent each element of the group $G=\mathbb{Z}_3^3$ as a string of length three;  
thus, instead of writing $(i,j,k)$, we write $ijk$ for brevity.
Let $\Gamma=\Cay(G,S)$.

Since $|S|=12$, inequality~\eqref{ineq:IDN Berge} yields
\[
i(\Gamma)\geq\left\lceil\frac{27}{12+1}\right\rceil=\left\lceil\frac{27}{13}\right\rceil=3.
\]
Moreover,
\[
D=\{000,111,222\}
\]
is an independent dominating set in $\Gamma$, which implies $i(\Gamma)=3$.
To see that $D$ dominates, observe that if $g\in G$ has three distinct coordinates, 
then $g$ is adjacent to $000$.
On the other hand, $g$ is adjacent to $kkk$ whenever $g$ has at least two coordinates equal to $k$.

\medskip
We now prove that $\alpha(\Gamma)=4$.
It is straightforward to find an independent set of four vertices, 
for example $\{000,\,110,\,211,\,022\}$,
so $\alpha(\Gamma)\ge 4$.

Let $X$ be the induced subgraph of $\Gamma$ on $V(\Gamma)\setminus N[000]$.
By Lemma~\ref{lemma:vertex-transitive}, $\alpha(\Gamma)=\alpha(X)+1$.
Thus it suffices to show that $\alpha(X)\leq3$.

The vertex set of $X$ consists precisely of the vertices that are not adjacent to $000$.
In particular, $111,222\in V(X)$.
Arrange the remaining vertices of $X$, distinct from $111$ and $222$, as follows:
\begin{align*}
A=\{110,101,011\},\; C=\{220,202,022\},\\
B=\{112,121,211\},\; D=\{221,212,122\}.
\end{align*}
Let $Q=\{111,222\}$.
Each of the sets $A,B,C,D$ induces a triangle in $X$.
Moreover, every vertex in $A\cup B$ is adjacent to $111$, 
and every vertex in $C\cup D$ is adjacent to $222$.

Let $J$ be an independent set in $X$.
We claim that $|J|\leq3$.

\smallskip\noindent
\textbf{Case 1: $J\cap Q\neq\varnothing$.}
By symmetry (using $x\mapsto -x$), assume $111\in J$.
Then $J$ contains no vertices from $A\cup B$, and it meets each of $C$ and $D$ 
in at most one vertex.
Hence $|J|\leq1+1+1=3$.

\smallskip\noindent
\textbf{Case 2: $J\cap Q=\varnothing$.}
If $J\cap A=\varnothing$, then $J$ meets each of $B,C,D$ in at most one vertex, 
so $|J|\leq3$.
Otherwise, by symmetry we may assume $110\in J$.
Only three vertices in $B\cup D$ are non-adjacent to $110$, namely
\[
E=\{121,211,221\}.
\]
The set $E$ forms a triangle in $X$, so $|J\cap E|\leq1$.
Together with $|J\cap C|\leq1$, this gives $|J|\leq1+1+1=3$.

Thus $\alpha(X)\leq3$, and therefore $\alpha(\Gamma)=\alpha(X)+1\le 4$.
Consequently, $\alpha(\Gamma)=4$.
\end{proof}

\section{The groups $A_4$ and $(C_3\times C_3)\rtimes C_2$}
\label{section:Order 12 and 18}

In this section, we compute the independence number and the independent domination number 
for certain Cayley graphs of the alternating group $A_4$ and the semidirect product 
$(C_3 \times C_3) \rtimes C_2$, where the nontrivial element of $C_2$ acts on 
$C_3 \times C_3$ by inversion.
We start with a presentation of $A_4$.
\begin{lemma}
\label{lemma:semidirect Z2^2:Z3}
Let 
\[
G=\langle a,b,t\mid a^2=b^2=t^3=e,\;ab=ba,\;t^{-1}at=b,\;t^{-1}bt=ab\rangle
\]
and let $A=\{e,b,t\}$. Then
\[
i\bigl(\Cay(G,\partial A)\bigr)=2\quad \text{and}\quad
\alpha\bigl(\Cay(G,\partial A)\bigr)=3.
\]
\end{lemma}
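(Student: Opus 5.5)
We plan to compute $S=\partial A$ explicitly and then apply the same two tools used for the other small groups: the bound \eqref{ineq:IDN Berge} for the lower index, and Lemma~\ref{lemma:vertex-transitive} for the upper index. Since $A^{-1}A$ consists of the products $x^{-1}y$ with $x,y\in\{e,b,t\}$, we get
\[
S=\{b,\;t,\;t^{-1},\;bt,\;t^{-1}b\}.
\]
This set is symmetric because $(bt)^{-1}=t^{-1}b$. We will check that these five elements are distinct in $G\cong A_4$, for instance by writing everything as words in the normal form $xt^j$ with $x\in\langle a,b\rangle$ and $j\in\{0,1,2\}$, using $tb=abt$ and $ta=bt$, both of which follow from the relations. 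Then $\Gamma=\Cay(G,S)$ is $5$-regular on $12$ vertices, and \eqref{ineq:IDN Berge} gives $i(\Gamma)\ge\lceil 12/6\rceil=2$.

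For the upper bound $i(\Gamma)\le 2$, we need a vertex $x$ with $N[x]\cap N[e]=\varnothing$. Indeed, $N[e]=S\cup\{e\}$ has $6$ elements and $N[x]=(S\cup\{e\})x$ has $6$ elements, so in that case $N[e]\cup N[x]=G$, and $\{e,x\}$ is an independent dominating set. We will list the six elements of $W=G\setminus N[e]$ in normal form. For each candidate $x\in W$ we then test whether $Sx\subset W$, equivalently $sx\notin N[e]$ for all $s\in S$. We expect such an $x$ to exist, possibly a natural element such as $a$ or $at$. Finding it is the only real search and is a finite check.

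Once $x$ is found, the induced subgraph $X$ on $W$ equals $N[x]$, and $x$ is adjacent to all other five vertices of $X$. Hence $\alpha(X)=\max\bigl(1,\alpha(X[N(x)])\bigr)$. By Lemma~\ref{lemma:vertex-transitive}, $\alpha(\Gamma)=\alpha(X)+1$, so it suffices to show that the five neighbours of $x$ contain a non-adjacent pair but no independent triple. For this we will compute the edges among $N(x)=Sx$ via the test $uv^{-1}\in S$. Equivalently, two vertices $sx$ and $s'x$ are adjacent exactly when $s s'^{-1}\in S$, so the check reduces to computing the ten products $ss'^{-1}$ for $s\ne s'$ in $S$. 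I expect the induced graph to be a $5$-cycle, or at least to have independence number $2$. That would give $\alpha(X)=2$ and therefore $\alpha(\Gamma)=3$.

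The main obstacle is bookkeeping rather than ideas. The delicate steps are correctly reducing words in $A_4$ and verifying both the choice of $x$ and the edge structure on $Sx$. As a cross-check, we will exhibit an explicit independent triple $\{e,u,w\}$ with $u,w\in Sx$ non-adjacent, confirming $\alpha(\Gamma)\ge 3$ directly.
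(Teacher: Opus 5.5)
Your strategy is sound and gives both values, but it is organized differently from the paper. The paper asserts, via ``a short computation'', that $\Cay(G,\partial A)$ is the icosahedral graph. It then exhibits $\{e,a\}$ as an independent dominating set and quotes the known value $\alpha=3$ for the icosahedron. You never identify the whole graph. You get $i\ge 2$ from \eqref{ineq:IDN Berge} (already $N[e]\neq G$ suffices). You get $i\le 2$ from a vertex $x$ with $N[x]=G\setminus N[e]$. You get $\alpha=3$ from Lemma~\ref{lemma:vertex-transitive}, using that the subgraph induced on $N(e)=S$ (adjacency $ss'^{-1}\in S$) is a $5$-cycle. These are exactly the local features of the icosahedron (a unique antipode, pentagonal links) that the paper uses implicitly. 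Your route is therefore more self-contained and matches how the paper treats the other small groups. The paper's route is shorter, but it rests on the unchecked identification with the icosahedron and on the quoted value of its independence number.

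One concrete error must be fixed: your rewriting rules are wrong. From $t^{-1}at=b$ and $t^{-1}bt=ab$ you get $at=tb$ and $bt=tab$. Hence $tb=at$, $ta=abt$ and $t(ab)=bt$. Your rules $tb=abt$, $ta=bt$ describe conjugation in the opposite direction and contradict the presentation: $ta=bt$ together with $ta=abt$ forces $a=e$.

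Computing with your rules amounts to working in an isomorphic copy of $G$ with the labels $a$ and $ab$ interchanged. That relabeling fixes $b$ and $t$, hence fixes $S$, so your final numbers would still be $2$ and $3$. The explicit witnesses, however, would be false in $G$. You would obtain $t^{-1}b=at^2$ and the partner $x=ab$. But in $G$ one has $t^{-1}b=abt^2$, and $\{e,ab\}$ is not dominating, because $t\cdot ab=bt\in N[e]$.

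With the correct rules:
\begin{itemize}
\item $S=\{b,t,t^2,bt,abt^2\}$ and $G\setminus N[e]=\{a,ab,at,abt,at^2,bt^2\}$.
\item $Sa=\{ab,abt,bt^2,at,at^2\}$, so $x=a$ works; this is the paper's choice.
\item The graph induced on $S$ is the $5$-cycle $b\sim t^2\sim t\sim bt\sim abt^2\sim b$, so $\alpha(X)=2$ and $\alpha=3$.
\item For example, $\{e,ab,abt\}$ is an independent triple.
\end{itemize}
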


\begin{proof}
It is easy to verify that $G\cong A_4$. 
Every element of $G$ has a unique normal form $t^i a^j b^k$, where $i\in\{0,1,2\}$ and 
$j,k\in\{0,1\}$. Since $bt=tab$, we obtain
\[
\partial A=(A^{-1}A)\setminus\{e\}=\{b,t,t^2,tab,t^2b\}.
\] 
%
\begin{figure}[ht]
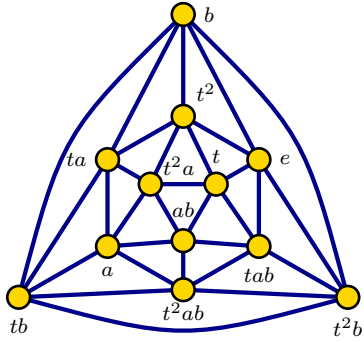

    \centering
    \def\a{5mm} 
    \def\r{1.5mm} 
    \CayleyGraphAlt
\caption
{The Cayley graph $\Cay(A_4,\partial A)$ is the icosahedral graph}
\label{fig:Cayley graph A4}
\end{figure}
%
A short computation shows that $\Cay(G,\partial A)$ is isomorphic to the icosahedral graph, 
which is depicted in Figure~\ref{fig:Cayley graph A4}. 
We observe that $\{e,a\}$ is an independent dominating set, 
so $i\bigl(\Cay(G,\partial A)\bigr)=2$. 
On the other hand, the icosahedral graph has independence number $3$ 
(for example, $\{e,ab,tb\}$ is an independent set of size $3$).
Therefore, $\alpha\bigl(\Cay(G,\partial A)\bigr)=3$.
\end{proof}

\begin{lemma}
\label{lemma:semidirect Z3^2:Z2}
Let 
\[
G=\langle a,b,t\mid a^3=b^3=t^2=e,\;ab=ba,\;tat=a^2,\;tbt=b^2\rangle
\]
and let $A=\{e,a,t,bt\}$.
Then
\[
i\bigl(\Cay(G,\partial A)\bigr)=2,\quad\text{and}\quad
\alpha\bigl(\Cay(G,\partial A)\bigr)=4.
\]
\end{lemma}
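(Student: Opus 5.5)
The plan is to compute $\partial A$ explicitly and use the inequality \eqref{ineq:IDN Berge} for $i$. For $\alpha$, I would combine an explicit independent set with Lemma~\ref{lemma:vertex-transitive}, as in Sections~\ref{section:group of order 21 and 27}--\ref{section:Direct}.

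First, I compute $A^{-1}A$. Every element has a unique normal form $a^ib^jt^k$, and the relation $tbt=b^2$ gives $tb^2=bt$, so $bt$ is an involution. Hence $A^{-1}=\{e,a^2,t,bt\}$. Multiplying out the sixteen products $x^{-1}y$ with $x,y\in A$, and using $ta=a^2t$, $tbt=b^2$ and $bta=a^2bt$, I expect
\[
\partial A=\{a,\;a^2,\;b,\;b^2,\;t,\;bt,\;a^2t,\;a^2bt\},
\]
a symmetric set of size $8$. So $\Gamma=\Cay(G,\partial A)$ is $8$-regular on $18$ vertices, and \eqref{ineq:IDN Berge} gives $i(\Gamma)\ge\lceil 18/9\rceil=2$.

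For the upper bound $i(\Gamma)\le 2$, I need one vertex $x\notin N[e]$ with $N[x]\cap N[e]=\varnothing$. Since $|N[x]|=|N[e]|=9$, disjointness already forces $N[e]\cup N[x]=G$, so $\{e,x\}$ is then an independent dominating set. The condition $N[x]\cap N[e]=\varnothing$ is equivalent to $x\notin (A^{-1}A)^{-1}(A^{-1}A)=(A^{-1}A)^2$. I will therefore compute $(A^{-1}A)^2$ and exhibit an element outside it. Natural candidates are the nine vertices outside $N[e]$: the elements $ab,ab^2,a^2b,a^2b^2$ of $\langle a,b\rangle$, and the elements $at,\,b^2t,\,abt,\,ab^2t,\,a^2b^2t$ of the coset $\langle a,b\rangle t$. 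Checking these is a finite computation, and I expect one of them, for instance a suitable element of the coset $\langle a,b\rangle t$, to work.

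For $\alpha(\Gamma)=4$, I first exhibit an explicit independent set of size $4$ containing $e$, chosen among the nine vertices of $W=G\setminus N[e]$; independence is checked by verifying $uv^{-1}\notin\partial A$ for each pair. For the upper bound, Lemma~\ref{lemma:vertex-transitive} reduces the problem to showing $\alpha(X)\le 3$, where $X$ is the induced subgraph on $W$. I intend to do this as in Lemma~\ref{lemma:Z3^3}: I will list the adjacencies inside $W$, again by checking $uv^{-1}\in\partial A$, and find three cliques, ideally triangles, covering the nine vertices of $W$. If such a partition exists, any independent set of $X$ meets each clique at most once, so $\alpha(X)\le 3$.

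The main obstacle is this last step. If $W$ does not split into three triangles, I would fall back on a short case analysis, as in Case~1 and Case~2 of Lemma~\ref{lemma:Z3^3}. I would split according to whether $J$ meets $\langle a,b\rangle$, and use the automorphism $x\mapsto x^{-1}$-type symmetries, or conjugation fixing $A$ up to translation, to reduce the number of cases.
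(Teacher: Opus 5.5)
Your proposal is correct, and the computations you defer all go through. You differ from the paper only in the upper bound $\alpha\le 4$.

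The paper does not pass to $W$ there. It covers all of $G$ by the six triangles $T(h)=\langle a\rangle h$, $h\in\langle b,t\rangle$. It then observes that $at$ and $abt$ are the only vertices of $T(t)\cup T(bt)$ outside $N[e]$, and that they are adjacent to each other. Finally it checks that choosing either of them excludes two further triangles.

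Your route via Lemma~\ref{lemma:vertex-transitive} lands in the ideal case you hoped for, because $W$ partitions into three triangles:
\[
W=\{ab,\,ab^2,\,a^2b^2t\}\cup\{a^2b,\,a^2b^2,\,b^2t\}\cup\{at,\,abt,\,ab^2t\}.
\]
The last set is the coset $\langle b\rangle at$. The first two are cliques because, for example, $ab\,(a^2b^2t)^{-1}=t$, $ab^2(a^2b^2t)^{-1}=bt$, $a^2b\,(b^2t)^{-1}=a^2t$ and $a^2b^2(b^2t)^{-1}=a^2bt$, while the two elements in $\langle a,b\rangle$ differ by $b^{\pm1}$. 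Hence $\alpha(X)\le 3$ at once, with no case analysis. The set $\{e,ab,abt,b^2t\}$, which takes one vertex from each triangle, gives the matching lower bound.

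These triangles are not cosets of a single subgroup. Restricting the paper's $\langle a\rangle$-cosets to $W$ gives five cliques, which is too weak. So passing to $W$ is what lets you use a single clique-cover certificate. The paper instead uses a coarser global coset cover and refines it with a two-case argument.

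For $i$, your reformulation is a tidy way to organise the search: since $|N[x]|=|G|/2$, the pair $\{e,x\}$ works if and only if $x\notin(A^{-1}A)^2$. It yields $x=ab^2t$, which is in fact the only such vertex and is exactly the paper's dominating pair $\{e,ab^2t\}$.
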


\begin{proof}
The group $G$ is the semidirect product of the elementary abelian group 
$\langle a,b\rangle$ of order $9$ and the cyclic group $\langle t\rangle$ of order $2$.
Every element of $G$ can be uniquely written in the form $a^i b^j t^k$, 
with $i, j\in\{0, 1, 2\}$ and $k \in \{0, 1\}$.  
We will use this representation throughout.  
A straightforward computation shows that
\[ 
\partial A = \{a,\; a^2,\; b,\; b^2,\; t,\; bt,\; a^2t,\; a^2bt\}.
\]
One can verify that the closed neighborhoods $N[e]$ and $N[ab^2t]$ cover $G$
and that $\{e,ab^2t\}$ is an independent set in $\Cay(G,\partial A)$, so
$i\bigl(\Cay(G,\partial A)\bigr)\leq 2$, and since $i\bigl(\Cay(G,\partial A)\bigr)\ge 2$, 
we get $i\bigl(\Cay(G,\partial A)\bigr)=2$.

Next, note that $\{e,ab,abt,b^2t\}$ is an independent set of size four.
We show that every independent set $I$ in $\Cay(G,\partial A)$ has at most four vertices.  
Since $\Cay(G,\partial A)$ is vertex-transitive, we may assume that $e\in I$.
The vertex set of $\Cay(G,\partial A)$ decomposes into the cosets
\[
T(h) = \{h, ah, a^2h\},\;h\in \langle b, t \rangle.
\]
Each $T(h)$ forms a triangle in $\Cay(G, \partial A)$ because $a,a^2\in \partial A$ and $a^3=e$. 
In an independent set, at most one vertex can be chosen from each triangle.
The triangles $T(t)$ and $T(bt)$ 
each contain exactly one vertex that is not adjacent to $e$: $at$ and $abt$, respectively.
If $I$ contains no vertices from these two triangles, then $|I| \leq 4$. 
Since $at$ and $abt$ are adjacent, at most one of them can belong to $I$. 
If $at \in I$, then we have 
\[
T(bt)\,\cup\,T(b) \subset N(e)\,\cup\, N(at);
\] 
if $abt \in I$, then 
\[
T(t)\,\cup\, T(b^2) \subset N(e)\,\cup\, N(abt). 
\]
In both cases, we see that at least two triangles cannot contain any elements of $I$.
Thus, $\alpha\bigl(\Cay(G,\partial A)\bigr)=4$.
\end{proof}

\section{Groups of order $16$}
\label{section:Order 16}

In this section, we compute the independence number and 
the independent domination number for Cayley graphs of certain groups of order $16$.
It is well known that there are $14$ groups of order $16$.
A modern and concise account of the classification of groups of order $16$ 
can be found in \cite{Wild}.
\begin{lemma}
\label{lemma:groups of order 16}
    All groups of order $16$ except $\mathbb{Z}_2^4$ are unstable.
\end{lemma}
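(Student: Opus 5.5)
We will go through the $14$ groups of order $16$ and, for each group $G\neq\mathbb{Z}_2^4$, produce an unstable subgroup. By Corollary~\ref{corollary: index stable of subgroups} this shows that $G$ is unstable. Failing that, we apply Lemma~\ref{lemma:On cyclic quotient group}. Only a few groups will need a direct computation of $i$ and $\alpha$ for a well-chosen Cayley graph.

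\textbf{Step 1.} First we remove every group that contains an element of order $8$ or $16$. These are $\mathbb{Z}_{16}$, $\mathbb{Z}_8\times\mathbb{Z}_2$, $D_8$, $SD_{16}$, $Q_{16}$, and $M_{16}=\langle a,b\mid a^8=b^2=e,\ bab=a^5\rangle$. Each of them contains a cyclic subgroup $\mathbb{Z}_8$ (or is $\mathbb{Z}_{16}$). By Lemma~\ref{lemma:indices 01 in Zn}, $\mathbb{Z}_8$ has $\lceil 8/3\rceil=3<4=\lfloor 8/2\rfloor$, so it is unstable.

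\textbf{Step 2.} Next we treat groups with a cyclic quotient of order $4$ whose kernel has order $4\geq3$. Lemma~\ref{lemma:On cyclic quotient group} applies to these directly. This covers $\mathbb{Z}_4\times\mathbb{Z}_4$ (quotient by a $\mathbb{Z}_4$ factor) and $\mathbb{Z}_4\times\mathbb{Z}_2^2$. It also covers the semidirect products $\mathbb{Z}_4\rtimes\mathbb{Z}_4$ and $(\mathbb{Z}_4\times\mathbb{Z}_2)\rtimes\mathbb{Z}_2$: in each, the abelianization has a $\mathbb{Z}_4$ quotient, which can be checked from standard presentations. The same holds for the Pauli group $\mathbb{Z}_4\circ D_4$ and for $\mathbb{Z}_2\times Q_8$ and $\mathbb{Z}_2\times D_4$, if their abelianizations admit a $\mathbb{Z}_4$ quotient.

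The abelianizations of $D_4\times\mathbb{Z}_2$, $Q_8\times\mathbb{Z}_2$, and the Pauli group are elementary abelian, $\mathbb{Z}_2^3$, so Lemma~\ref{lemma:On cyclic quotient group} fails for them. For these we instead look for an unstable subgroup of order $8$. The natural candidates are $D_4$ and $Q_8$ themselves. $D_4$ is unstable by Lemma~\ref{lemma:indices Dn} with $n=4$: $i=\lceil 8/5\rceil=2<2=\lfloor 8/3\rfloor$ gives no gap, so we need another subset. We would check directly, for example, $A=\{e,r\}$ with $r$ of order $4$. This gives the subgroup $\mathbb{Z}_4$, but $\mathbb{Z}_4$ is stable ($i(C_4)=\alpha(C_4)=2$), and Lemma~\ref{lemma:On the subgroup index} only multiplies by the index. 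So $D_4$, $Q_8$, and $\mathbb{Z}_4\times\mathbb{Z}_2$ may well be stable, and for these three groups of order $16$ we must find explicit subsets. Subgroups $\mathbb{Z}_4\times\mathbb{Z}_2^2$ are not present in them either.

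\textbf{Step 3 (main obstacle).} For the remaining groups, $D_4\times\mathbb{Z}_2$, $Q_8\times\mathbb{Z}_2$, and $\mathbb{Z}_4\circ D_4$, together with any group from Step 2 where the quotient argument fails, we choose a small subset $A$ containing $e$. Typically $A=\{e,x,y\}$ or $\{e,x,y,z\}$, with $x,y$ noncommuting generators and $z$ central of order $2$. We then compute $\partial A$ and bound the Cayley graph $\Gamma=\Cay(G,\partial A)$ on $16$ vertices from both sides.

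For the lower index we exhibit an independent dominating set of size $2$ or $3$. Inequality~\eqref{ineq:IDN Berge} certifies optimality whenever $|\partial A|$ is large.

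For the upper index we exhibit a larger independent set. To show the two indices differ, it is enough to prove that no smaller maximal independent set exists, or that a larger independent set exists. Optimality is not needed, only $\alpha>i$. So the burden reduces to two explicit sets per group, verified by hand, for instance via Lemma~\ref{lemma:vertex-transitive} applied to $V(\Gamma)\setminus N[e]$, or cross-checked with the \GAP code of Section~\ref{section:gap}.

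The hardest part is finding subsets $A$ that actually give a gap in these three $2$-groups. The first thing to try is $A=\{e,x,y,xy\}$-type sets, which make $\Gamma$ sparse enough that $\alpha$ exceeds $i$.
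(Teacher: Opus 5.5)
\textbf{There is a genuine gap: you never prove that the three remaining groups are unstable.} Your Steps 1 and 2 match the paper's reduction. Groups with an element of order $8$ fall to Lemma~\ref{lemma:indices 01 in Zn}, and groups with a normal subgroup of order $4$ and quotient $C_4$ fall to Lemma~\ref{lemma:On cyclic quotient group}. What remains is exactly $C_2\times D_4$, $C_2\times Q_8$ and the Pauli group (\texttt{GAP} IDs 11, 12, 13). For these three groups, however, you never specify a subset $A$ or compute $i$ and $\alpha$. Step~3 is a search strategy, not an argument, so the lemma is unproved precisely where it has content. Incidentally, Lemma~\ref{lemma:indices Dn} with $n=4$ gives $i=\alpha=2$, so it does not show $D_4$ unstable. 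In fact $D_4$, $Q_8$ and $C_4\times C_2$ are all stable.

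\textbf{The choices of $A$ you suggest provably cannot work in several cases.} Since $G/G'\cong\mathbb{Z}_2^3$ for these groups, also $G/\Phi(G)\cong\mathbb{Z}_2^3$, so $G$ is not $2$-generated. Any $A=\{e,x,y\}$ or $\{e,x,y,xy\}$ therefore lies in the proper subgroup $\langle x,y\rangle$. That subgroup has order at most $8$ and exponent at most $4$, so it is stable (it appears in the list of Theorem~\ref{theorem:characterise finite stable groups}). By Lemma~\ref{lemma:On the subgroup index}, $\Cay(G,\partial A)$ is then a disjoint union of well-covered graphs, and there is no gap. The same happens for $\{e,x,y,z\}$ with $x,y$ noncommuting and $z$ a central involution whenever $z\in G'=\langle a^2\rangle$, because then $z=[x,y]\in\langle x,y\rangle$. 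In the Pauli group this is forced, since $a^2$ is its only central involution; in the other two groups the choice $z=a^2$ also fails.

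\textbf{What is needed is a genuine three-element generating set, followed by an actual computation.} The paper takes $A=\{e,a,b,c\}$:
\begin{itemize}
\item in IDs 11 and 12, $\langle a,b\rangle\cong D_4$ or $Q_8$ and $c$ is the central $C_2$ factor;
\item in ID 13, $a$ is central of order $4$ and $b,c$ are involutions with $(bc)^2=a^2$.
\end{itemize}
Then $G\setminus N[e]$ has $7$, $4$ and $5$ vertices, respectively. In each case the induced subgraph $X$ contains a vertex adjacent to all others (for example $a^2b$ in $C_2\times D_4$) and an independent set of size $3$. Hence $i(X)=1$ and $\alpha(X)=3$, and Lemma~\ref{lemma:vertex-transitive} gives $i=2<4=\alpha$.
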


\begin{proof}
Let $G$ be a group of order $16$. 
If $G$ contains an element of order $8$, then the statement follows from 
Lemma~\ref{lemma:indices 01 in Zn} 
and Corollary~\ref{corollary: index stable of subgroups}.
There are four non-abelian groups of order $16$ with this property:
\verb"C8 : C2", \verb"D16", \verb"QD16", and \verb"Q16".
Their respective IDs among groups of order $16$ in \GAP \cite{GAP} are 6, 7, 8, and 9, respectively.
Henceforth, all structural descriptions of groups (e.g., \verb"C8 : C2", etc.) 
and their corresponding IDs are directly taken from the \GAP computational algebra system.
Note that \GAP\unskip's \verb"StructureDescription" is not always unique up to isomorphism; in particular, 
\verb"(C4 x C2) : C2" occurs for IDs 3 and 13 among groups of order $16$. 
Hence, we will refer to such groups by their \GAP IDs when needed.

\smallskip\noindent
If $G$ has a normal subgroup $F$ of order $4$ with a cyclic quotient group $G/F$, then 
by Lemma~\ref{lemma:On cyclic quotient group}, $G$ is unstable.
There are three such non-abelian groups of order $16$: 
\verb"(C4 x C2) : C2", \verb"C4 : C4", and \verb"C8 : C2" 
(IDs 3, 4, and 6 in \GAP\unskip).
Every abelian group of order $16$ except $\mathbb{Z}_2^4$ also has 
a cyclic quotient group of order $4$, 
and hence it is unstable.

\smallskip\noindent
Thus, it remains to verify the claim for the three remaining groups of order $16$, 
namely those of \GAP IDs 11, 12, and 13, 
with \verb"StructureDescription" equal to \verb"C2 x D8", \verb"C2 x Q8", 
and \verb"(C4 x C2) : C2", respectively.
Now, for each of these three groups,
we specify a subset $A$ and compute the independence number and 
the independent domination number of the Cayley graph $\Cay(G,\partial A)$.

\begin{figure}[ht]
\centering
\InducedSubgraphsOfThreeGroups
\caption[Induced subgraphs on \protect{$G\setminus N[e]$} for three groups of order 16]
{Induced subgraphs on $G\setminus N[e]$ \\for Cayley graphs of three groups of order 16}
\label{fig:Induced subgraphs of groups of order 16}
\end{figure}

\smallskip\noindent
\textbf{Case} \verb"C2 x D8".
This group can be defined by the following presentation:
\[
G=\langle a,b,c\mid a^4=b^2=c^2=e, \;ac=ca,\; bc=cb,\; (ab)^2=e\rangle.
\]
Note that $D_4\cong\langle a,b\rangle$. If $A=\{e,a,b,c\}$, then
\[
\partial A=\{a,\; a^3,\; a^3b,\; b,\; ac,\; a^3c,\; bc,\; c\}.
\]
We have used that $(ab)^2=e$, equivalently $bab=a^3$, and that $bc=cb$.
In $\Cay(G,\partial A)$, consider the subgraph $X$ induced on the set
\[
G\setminus N[e]=\{a^2,\; a^2b,\; ab,\; a^2c,\; abc,\; a^2bc,\; a^3bc\}.
\]
Figure~\ref{fig:Induced subgraphs of groups of order 16}(i) shows $X$.
Since $i(X)=1$ and $\alpha(X)=3$, it follows from Lemma~\ref{lemma:vertex-transitive}
that 
$i\bigl(\Cay(G,\partial A)\bigr)=2$ and 
$\alpha\bigl(\Cay(G,\partial A)\bigr)=4$.

\smallskip\noindent
\textbf{Case} \verb"C2 x Q8".
Let 
\[
G=\langle a,b,c\mid a^4=b^4=c^2=e,\;a^2=b^2,\;ac=ca,\;bc=cb,\;(ab)^2=a^2\rangle.
\]
Let $H=\langle a,b\rangle$ and $A=\{e,a,b,c\}$. We have $H\cong Q_8$.
Since $a^2,c\in Z(G)$ and $ab^3=ba$, it follows that
\[
(A^{-1}A)\cap H=\{e,\; a,\; b,\; a^3,\; b^3,\; a^3b,\; b^3a\}\quad\text{and}\quad
H\setminus A^{-1}A=\{a^2\}
\]
and
\[
(A^{-1}A)\cap Hc=\{c,\; ac,\; bc,\; a^3c,\; b^3c\}\quad\text{and}\quad
Hc\setminus A^{-1}A=\{a^2c,\; abc,\; a^3bc\}.
\]
Therefore,
\[
\partial A=\{a,\; b,\; a^3,\; b^3,\; a^3b,\; b^3a,\; c,\; ac,\; bc,\; a^3c,\; b^3c\}
\]
and
\[
G\setminus N[e]=\{a^2,\; a^2c,\; abc,\; a^3bc\}.
\]
Let $X$ be the subgraph induced on $G\setminus N[e]$;
see Figure~\ref{fig:Induced subgraphs of groups of order 16}(ii).
It is clear that $i(X)=1$ and $\alpha(X)=3$.
Together with Lemma~\ref{lemma:vertex-transitive}, this implies
$i\bigl(\Cay(G,\partial A)\bigr)=2$ and $\alpha\bigl(\Cay(G,\partial A)\bigr)=4$.

\smallskip\noindent
\textbf{Case} \verb"(C4 x C2) : C2".
In this case, we consider the group of order $16$ with \GAP ID~13\unskip.
It admits the presentation:
\[
G=\langle a,b,c\mid a^4=b^2=c^2=e,\;ab=ba,\;ca=ac,\;(bc)^2=a^2\rangle.
\]

Note that, although this will not be used, $\langle b,c\rangle\cong D_4$ and 
$Q_8\cong\langle ab,ac\rangle$; 
moreover, $G$ is sometimes called the Pauli group (in the context of quantum physics).

Let $A=\{e,a,b,c\}$.
As above, set $H=\langle a,b\rangle$. We have $H\cong C_4\times C_2$. 
Since $a\in Z(G)$ and $cbc=a^2b$, it follows that
\[
(A^{-1}A)\cap H=\{e,\; a,\; a^3,\; b,\; ab,\; a^3b\}\quad\text{and}\quad
H\setminus A^{-1}A=\{a^2,\; a^2b\}
\]
and
\[
(A^{-1}A)\cap Hc=\{c,\; ac,\; a^3c,\; bc,\; a^2bc\}\quad\text{and}\quad
Hc\setminus A^{-1}A=\{a^2c,\; abc,\; a^3bc\}.
\]
Therefore,
\[
\partial A=\{a,\; a^3,\; ab,\; a^3b,\; b,\; c,\; ac,\; a^3c,\; bc,\; a^2bc\}
\]
and
\[
G\setminus N[e]=\{a^2,\; a^2b,\; a^2c,\; abc,\; a^3bc\}.
\]
The induced subgraph on $G\setminus N[e]$ is shown
in Figure~\ref{fig:Induced subgraphs of groups of order 16}(iii). 
Lemma~\ref{lemma:vertex-transitive} now yields 
$i\bigl(\Cay(G,\partial A)\bigr)=2$ and $\alpha\bigl(\Cay(G,\partial A)\bigr)=4$.
\end{proof}

\section{Characterizing finite stable groups}
\label{section:Characterise finite stable groups}
  
The following theorem gives a complete characterization of finite stable groups.
The proof of this theorem, as given in \cite[Theorem 3.1]{HooshmandArani}, 
relies essentially on computer computations.
Here, we present a proof that does not require any computer computations.

\begin{theorem}
\label{theorem:characterise finite stable groups} 
There are exactly $14$ finite stable groups up to isomorphism:
\begin{equation}\label{list:finite stable groups}
\begin{array}{cc}
C_1,\;C_2,\
C_2\times C_2,\
C_2\times C_2\times C_2,\
C_2\times C_2\times C_2\times C_2,\;\\
C_3,\
C_3\times C_3,\
C_4,\
C_2\times C_4,
C_5,\
C_7,\
S_3,\;D_4,\;Q_8.
\end{array}
\end{equation}
\end{theorem}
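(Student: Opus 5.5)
The proof has two parts: every group in \eqref{list:finite stable groups} is stable, and every other finite group is unstable. For the second part I will use Corollary~\ref{corollary: index stable of subgroups}: it suffices to find an unstable subgroup, or to apply Lemma~\ref{lemma:On cyclic quotient group}. The lemmas of Sections~\ref{section:Dihedral}--\ref{section:Order 16} provide the minimal unstable groups. Lemma~\ref{lemma:indices 01 in Zn} rules out $\mathbb{Z}_n$ for $n\ge 6$ and $n=6$. Lemma~\ref{lemma:indices Dn} rules out $D_n$ whenever $\lceil 2n/5\rceil<\lfloor 2n/3\rfloor$, which holds for $n\ge5$ (for $n=5$: $2<3$). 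Further unstable groups are the nonabelian group of order $21$, $UT(3,3)$, $\mathbb{Z}_2^5$, $\mathbb{Z}_3^3$, $A_4$, $(C_3\times C_3)\rtimes C_2$, and all groups of order $16$ except $\mathbb{Z}_2^4$.

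Let $G$ be stable. Then every element has order in $\{1,2,3,4,5,7\}$, since any other order produces a cyclic subgroup $\mathbb{Z}_m$ with $m=6$ or $m\ge8$. I then bound the prime divisors of $|G|$.

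\begin{itemize}
\item \emph{Primes $5$ and $7$.} If $5$ or $7$ divides $|G|$ and $G\neq C_5,C_7$, I look at a subgroup of order $5p$ or $7p$, or $p$-groups of exponent $5$ or $7$. Elements of order $10$, $14$ or $35$ are forbidden, and $\mathbb{Z}_5^2$, $\mathbb{Z}_7^2$ and the nonabelian groups of order $p^3$ contain subgroups with a cyclic quotient of order $5$ or $7$ and kernel of size $\ge 3$, so Lemma~\ref{lemma:On cyclic quotient group} kills them. The group $D_5$ is unstable, and $D_7$ is unstable by Lemma~\ref{lemma:indices Dn}. The nonabelian group of order $21$ is handled by Lemma~\ref{lemma:indices of group of order 21}. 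What remains is to see that a group with a subgroup of order $5$ or $7$ and another prime factor contains one of these. Here I would use Sylow/Cauchy to get elements $x$ of order $p\in\{5,7\}$ and $y$ of prime order $q\ne p$, and study $\langle x,y\rangle$. This step is delicate, since such a pair can generate a large simple group. I would first try to use the fact that the normalizer of a Sylow $p$-subgroup, or a minimal non-$p$-closed subgroup, is a group of order $p^aq^b$ with a normal Sylow subgroup. That reduces to $C_p\rtimes C_q$ or to $C_q^k\rtimes C_p$, and the latter has cyclic quotient $C_p$ with large kernel.
\item \emph{Primes $2$ and $3$.} Now $|G|=2^a3^b$ and $G$ is solvable by Burnside. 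For the $3$-part, exponent $3$ together with $\mathbb{Z}_3^3$ and $UT(3,3)$ unstable means every $3$-subgroup has order $\le 9$. For the $2$-part, exponent dividing $4$ with no subgroup of order $32$: the only order-$16$ candidate is $\mathbb{Z}_2^4$, and any group of order $32$ of exponent $\le 4$ contains a subgroup of order $16$ other than $\mathbb{Z}_2^4$ unless it is $\mathbb{Z}_2^5$, which is unstable by Lemma~\ref{lemma:Z2^5}. For mixed orders, elements of order $6$ are forbidden. So $G$ of order $2^a3^b$ with $a,b\ge1$ has a minimal subgroup that is $S_3$-like, $A_4$-like, or $(C_3\times C_3)\rtimes C_2$-like. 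I would argue via a minimal normal subgroup $N$: if $N$ is a $3$-group, a $2$-element acts fixed-point-freely, i.e.\ by inversion. This gives $S_3$ (allowed) when $|N|=3$ and $G=S_3$, and gives $(C_3\times C_3)\rtimes C_2$ when $|N|=9$. If $G$ is larger than $S_3$, I expect either an element of order $6$, $12$, or a cyclic quotient of order $4$ or more with kernel of order $\ge 3$ (Lemma~\ref{lemma:On cyclic quotient group}), or $D_6$ inside $G$. $D_6$ contains elements of order $6$, so it is excluded. If $N$ is a $2$-group, a $3$-element acts nontrivially, so $A_4\le G$, which is unstable by Lemma~\ref{lemma:semidirect Z2^2:Z3}.
\item \emph{Pure groups.} What remains is $3$-groups of order $\le 9$ and exponent $3$, namely $C_1,C_3,C_3^2$, and $2$-groups of order $\le16$ of exponent $\le4$ excluding the unstable ones. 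The latter leaves exactly $C_2,C_2^2,C_2^3,C_2^4,C_4,C_2\times C_4,D_4,Q_8$. Order-$8$ groups of exponent $8$ and $C_4\times C_4$ are excluded by the element-order restriction.
\end{itemize}

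For stability of the fourteen listed groups, I would check every subset $A$ up to translation and automorphism. This amounts to showing that each Cayley graph $\Cay(G,\partial A)$ is well-covered, using Lemma~\ref{lemma:vertex-transitive} to reduce to the small graph on $G\setminus N[e]$. For the elementary abelian $2$-groups I expect an argument along the lines of Lemma~\ref{lemma:Z2^5}. The main obstacle is this exhaustive verification for $\mathbb{Z}_2^4$ without a computer: I would organize it by the size of $\partial A$ and by the subgroup $\langle\partial A\rangle$, and reduce via Lemma~\ref{lemma:On the subgroup index} to smaller groups whenever $\partial A$ does not generate. The remaining cases are generating sets $\partial A$ with $|A|\ge 3$, which are few up to $GL_4(2)$.
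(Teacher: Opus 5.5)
\textbf{Gap 1: the primes $5$ and $7$.} Your necessity argument has the same architecture as the paper's: forbidden element orders, unstable subgroups, and Lemma~\ref{lemma:On cyclic quotient group}. However, the step you call delicate is left open, and the fact you propose to close it with is false. A minimal non-$p$-closed group need not have order $p^aq^b$ or a normal Sylow subgroup. For example, $A_5$ is minimal non-$5$-closed, since each proper subgroup has order prime to $5$ or a normal Sylow $5$-subgroup; likewise $PSL(2,7)$ is minimal non-$7$-closed.

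The argument that works is the paper's:
\begin{itemize}
\item Elements of order $pq$ are forbidden, so a Sylow $p$-subgroup $P$ (of order $p$) satisfies $C_G(P)=P$. Hence $N_G(P)/P\hookrightarrow\Aut(P)\cong C_{p-1}$.
\item If $N_G(P)\neq P$, Cauchy's theorem gives an element of order $2$ or $3$ in $N_G(P)$. It cannot centralize $P$, so $G$ contains $D_5$, $D_7$ or $C_7\rtimes C_3$.
\item If $N_G(P)=P$, then $P\le Z(N_G(P))$, and Burnside's normal $p$-complement theorem gives $F\lhd G$ with $G/F\cong C_p$. Here $|F|\ge 3$, since an $F$ of order $2$ would be central. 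So Lemma~\ref{lemma:On cyclic quotient group} applies to $G$ itself.
\end{itemize}
The missing ingredient is a transfer-type result of this kind (Burnside, Frobenius, or It\^o's theorem on minimal non-$p$-nilpotent groups). Passing to a minimal non-$p$-closed subgroup $H$ does not avoid it. A Sylow $p$-subgroup of $H$ is then a Sylow $p$-subgroup of $G$, so it is self-normalizing in $H$ as well, and you face the same problem inside $H$.

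\textbf{Gap 2: sufficiency, and smaller points.} For the fourteen groups you only outline an exhaustive search, naming $\mathbb{Z}_2^4$ as the main obstacle. What makes the check short is monotonicity in $A$:
\begin{itemize}
\item If $e\in A\subseteq B$, then $G\setminus B^{-1}B\subseteq G\setminus A^{-1}A$.
\item The property ``the induced subgraph on $G\setminus A^{-1}A$ is complete or has at most two vertices'' passes to subsets. By Lemma~\ref{lemma:vertex-transitive} it forces $\Cay(G,\partial A)$ to be well-covered.
\item Combined with Lemma~\ref{lemma:On the subgroup index}, and the fact that the list is closed under subgroups, you only need to check minimal generating sets containing $e$.
\end{itemize}
For $\mathbb{Z}_2^4$ these sets are $\{0\}$ together with a basis, all equivalent under $GL_4(2)$ to $\{0,e_1,e_2,e_3,e_4\}$. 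The complement of $A+A$ is then the five vectors of weight $\ge 3$, which form a $5$-clique, so no orbit enumeration is needed.

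There are also some smaller slips.
\begin{itemize}
\item Lemma~\ref{lemma:indices 01 in Zn} excludes $\mathbb{Z}_n$ for $n=6$ and $n\ge 8$, not for $n\ge 6$.
\item $C_4\times C_4$ has exponent $4$, so it is excluded by Lemma~\ref{lemma:groups of order 16}, not by element orders.
\item In the $\{2,3\}$ case, your ``I expect \dots'' for $|N|=3$ needs an actual argument. $C_G(N)$ contains no involution, so it is a $3$-group of order $\le 9$. Also $G/C_G(N)\hookrightarrow\Aut(N)\cong C_2$. Hence $G\cong S_3$ or $G\cong(C_3\times C_3)\rtimes C_2$.
\item Your use of Burnside's $p^aq^b$ theorem in that case is legitimate, but heavier than the paper's Sylow-and-counting argument.
\end{itemize}
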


Using the notions of well-covered graphs and Cayley graphs, we can reformulate this theorem as follows:
\begin{theorem*}
    If $G$ is a finite group for which the Cayley graph $\Cay(G,\partial A)$ 
    is well-covered for every subset $A \subseteq G$, 
    then $G$ is one of the groups listed in \eqref{list:finite stable groups}.
    Conversely, if $G$ is one of these groups, then $\Cay(G,\partial A)$ is well-covered for every $A \subseteq G$.
\end{theorem*}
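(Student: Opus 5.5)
The proof has two directions: every stable group is in the list \eqref{list:finite stable groups}, and every listed group is stable. We will use Corollary~\ref{corollary: index stable of subgroups} throughout. Stability passes to subgroups, so a group containing one of the unstable groups found above is unstable.

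The unstable groups from the earlier sections are:
\begin{itemize}[nosep]
\item $C_n$ for $n=6$ and $n\ge 8$ (Lemma~\ref{lemma:indices 01 in Zn});
\item $D_n$ for $n\ge 5$, since $\lceil 2n/5\rceil<\lfloor 2n/3\rfloor$ then (Lemma~\ref{lemma:indices Dn});
\item $C_7\rtimes C_3$ and $UT(3,3)$ (Section~\ref{section:group of order 21 and 27});
\item $\mathbb{Z}_2^5$ and $\mathbb{Z}_3^3$ (Lemmas~\ref{lemma:Z2^5} and~\ref{lemma:Z3^3});
\item $A_4$ and $(C_3\times C_3)\rtimes C_2$ (Section~\ref{section:Order 12 and 18});
\item all groups of order $16$ except $\mathbb{Z}_2^4$ (Lemma~\ref{lemma:groups of order 16}).
\end{itemize}
Lemma~\ref{lemma:On cyclic quotient group} adds further unstable groups.

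\textbf{Necessity.} Let $G$ be stable. Every element has order in $\{1,2,3,4,5,7\}$, so $|G|$ has prime divisors only among $2,3,5,7$.

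If a prime $p\ge5$ divides $|G|$, take $x$ of order $p$ and any other element $y$ of prime order $q$. A subgroup of order $pq$ must be cyclic, which is excluded, or nonabelian. The nonabelian options are $D_5$, $D_7$ and $C_7\rtimes C_3$, all excluded; $C_5\rtimes C_3$ does not exist. To see that such a subgroup appears, use Sylow theory: the Sylow subgroups are small because the exponent is bounded. Either $\langle x\rangle$ is normalized by some element of prime order, or a Sylow argument yields a subgroup of order $pq$. The goal is $G\cong C_5$ or $C_7$. I expect this Sylow bookkeeping to be one of the fiddly points.

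Otherwise $G$ is a $\{2,3\}$-group. I will first bound the Sylow subgroups.
\begin{itemize}[nosep]
\item A stable $3$-group has exponent $3$, no $\mathbb{Z}_3^3$, and no $UT(3,3)$. Every group of order $27$ contains one of these or has exponent $9$. So it has order at most $9$: $C_3$ or $C_3^2$.
\item A stable $2$-group has no subgroup of order $16$ other than $\mathbb{Z}_2^4$ and contains no $\mathbb{Z}_2^5$. A group of order $32$ all of whose order-$16$ subgroups are $\mathbb{Z}_2^4$ is elementary abelian, hence $\mathbb{Z}_2^5$. So the order is at most $16$, and in order $16$ only $\mathbb{Z}_2^4$ survives.
\end{itemize}

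For mixed $|G|=2^a3^b$, I will look for forbidden subgroups of order $6$, $12$ or $18$, namely $C_6$, $A_4$ and $(C_3\times C_3)\rtimes C_2$, and also apply Lemma~\ref{lemma:On cyclic quotient group}. If an involution centralizes an element of order $3$, we get $C_6$. Otherwise every involution acts fixed-point-freely on a normal $3$-part. For $b=2$ this produces $(C_3^2)\rtimes C_2$ with inversion. For $b=1$ it gives $S_3$, or $A_4$ when the $2$-part is normal, or quotients that contain $D_n$ with $n\ge5$ forbidden, or groups like $C_3\rtimes C_4$ that have elements of order $6$.

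Running through the small groups of orders $2^a3^b$ with $a\le4$ and $b\le2$ this way should leave only $S_3$. The stable $2$-groups left are $C_1$, $C_2$, $C_2^2$, $C_2^3$, $C_2^4$, $C_4$, $C_2\times C_4$, $D_4$ and $Q_8$. Among these, order $8$ excludes $C_8$. The main obstacle is making this $\{2,3\}$ case analysis clean without a computer. I would first try to prove a normal Sylow subgroup exists, using the exponent and the small Sylow orders, and then do the case split above.

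\textbf{Sufficiency.} Each listed group must be shown stable. For each group and each $A$ with $e\in A$, I would enumerate the possible $\partial A$ up to automorphism and show $\Cay(G,\partial A)$ is well-covered. Several tools cut down the work:
\begin{itemize}[nosep]
\item If $\partial A=G\setminus\{e\}$, the graph is complete, and the empty case is trivial.
\item Lemma~\ref{lemma:On the subgroup index} reduces to the subgroup generated by $\partial A$.
\item Lemma~\ref{lemma:vertex-transitive} reduces the problem to showing that the induced graph on $G\setminus N[e]$ is well-covered, and that graph is small.
\end{itemize}
The largest cases are $\mathbb{Z}_2^4$ and $C_3\times C_3$. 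There, well-coveredness of the graphs that arise, such as the Clebsch-type and Paley-type graphs, should follow from short independence arguments done case by case. This direction is lengthy but routine.
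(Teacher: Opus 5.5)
\textbf{Necessity, case $p\in\{5,7\}$.} Your bounds for $2$-groups and $3$-groups are correct; the maximal-subgroup argument for order $32$ is more explicit than the paper's. The necessity argument has a genuine gap once $G$ is not a $p$-group, and the $p\in\{5,7\}$ branch fails as stated.

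First, bounded exponent does not make the Sylow $p$-subgroup small, since $C_p\times C_p$ has exponent $p$. You need Lemma~\ref{lemma:On cyclic quotient group} (quotient $C_p$, kernel of order $p\ge 3$) to get $|P|=p$.

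More seriously, your dichotomy breaks when $P$ is self-normalizing. Every noncyclic group of order $pq$ with $q\ne p$ prime in $\{2,3,5,7\}$ has a normal subgroup of order $p$. So if $N_G(P)=P$, there is no subgroup of order $pq$ at all, and no Sylow argument can produce one. The missing idea is the paper's. In that case $N_G(P)=C_G(P)=P$, so Burnside's normal $p$-complement theorem gives $F\lhd G$ with $G/F\cong C_p$. Then Lemma~\ref{lemma:On cyclic quotient group} applies; $|F|\ge 3$, because a normal subgroup of order $2$ is central and would give an element of order $2p$. Only the branch $N_G(P)\neq P$ leads to $D_5$, $D_7$ or $C_7\rtimes C_3$.

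\textbf{Necessity, $\{2,3\}$-groups, and sufficiency.} Here you have a plan, not an argument. Checking all groups of order $2^a3^b$ up to $144$ by hand is not realistic. The phrase ``acts fixed-point-freely on a normal $3$-part'' presupposes a normal Sylow $3$-subgroup, which need not exist (e.g.\ in $A_4$ or $S_4$). You also invoke quotients, but you have only shown that stability passes to subgroups. It does pass to quotients, via $A=\pi^{-1}(A')$, but that needs proof.

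The structural steps you are missing are these:
\begin{itemize}
\item Centralizers of nontrivial elements are $p$-groups.
\item If $|P|=3$, the action on the three cosets of a Sylow $2$-subgroup $Q$ gives $G\to S_3$ with kernel inside $Q$. Either $G\le S_3$, or a minimal normal subgroup is an elementary abelian $2$-group on which an element of order $3$ acts fixed-point-freely, yielding $A_4$.
\item If $|P|=9$ and $N_G(P)\neq P$, an involution in $N_G(P)$ inverts $P$, yielding $(C_3\times C_3)\rtimes C_2$.
\item If $|P|=9$ and $N_G(P)=P$, distinct Sylow $3$-subgroups meet trivially. Counting the $8|Q|$ elements of order $3$ forces $Q\lhd G$, and $Q\langle t\rangle$ falls back into the case $|P|=3$.
\end{itemize}

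For sufficiency, your enumeration would work but is far longer than needed. For $B\supseteq A$ we have $G\setminus B^{-1}B\subseteq G\setminus A^{-1}A$, so ``at most two vertices'' and ``induces a clique'' both propagate upward. Combined with Lemma~\ref{lemma:On the subgroup index} and the fact that the list is closed under subgroups, only minimal generating sets containing $e$ need checking.
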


\begin{proof}
Let $G$ be a finite group satisfying the condition of the theorem.
By Lemma~\ref{lemma:indices 01 in Zn} and 
Corollary~\ref{corollary: index stable of subgroups}, 
every nontrivial cyclic subgroup of $G$ may only have order $2,\;3,\;4,\;5$, or $7$, 
and no other orders are allowed. 
Hence, if $p$ is a prime divisor of $|G|$, then $p\in\{2,3,5,7\}$. 
Furthermore, by Lemmas~\ref{lemma:Z2^5},~\ref{lemma:Z3^3}, and  
\ref{lemma:On cyclic quotient group},
the order of $G$ cannot be divisible by $49$, $25$, $27$, or $32$.

Moreover, the centralizer of every nontrivial element of $G$ must be a $p$-subgroup 
(for some $p\in\{2,3,5,7\}$); otherwise, $G$ would contain a cyclic subgroup of a forbidden order. 
In particular, if the center of $G$ is nontrivial, then $G$ is a $p$-group.

\smallskip\noindent
\textbf{Case 1:} $G$ is a $p$-group.
If $p\in\{5,7\}$, then $|G|=p$, and hence $G$ is cyclic. 
If $p=3$, then $G\cong C_3$ or $G\cong C_3\times C_3$.
If $p=2$, then by Lemma~\ref{lemma:groups of order 16} 
either $|G|\leq8$ or $G\cong C_2^4$.

\smallskip\noindent
\textbf{Case 2:} 
$|G|$ is divisible by $p\in\{5,7\}$ but is strictly larger than $p$.
Assume first $p=7$.
Let $P$ be a Sylow $p$-subgroup of $G$ and let $H=N_G(P)$.

Suppose that $H\neq P$.
Define $\varphi\colon H\to\Aut P$ by $\varphi(h)(x) = h^{-1} x h$ for $x\in P$.
Then $\Ker \varphi = C_H(P)$. 
Since $P\leq C_H(P)$ and $C_H(P)$ is a $p$-subgroup, since $P$ is a Sylow $p$-subgroup of $H$, 
we have $C_H(P)=P$. Hence, $H/P$ embeds into $\Aut(P)$.
It follows that $|H/P| \in \{2,3,6\}$, 
thus in any case $H$ has a subgroup of index $2$ or $3$ over $P$, 
and hence (as $C_H(P)=P$) contains a subgroup isomorphic to $D_7$ or a nonabelian group of order $21$.
None of these groups are stable 
(see Lemmas~\ref{lemma:indices Dn}, 
\ref{lemma:indices of group of order 21}), 
a contradiction.

Now assume that $H=P$.
Burnside's normal $p$-complement theorem 
\cite[Theorem 10.1.8]{Robinson} 
guarantees a normal subgroup $F\lhd G$ with $|G:F|=p$.  
By Lemma~\ref{lemma:On cyclic quotient group}, 
such a cyclic quotient prevents $G$ from being stable, 
another contradiction.
Thus, if $7\mid |G|$, then $|G|=7$.

An analogous argument applies when $p=5$: replace $D_7$ by $D_5$, and note that
any nontrivial factor of order $4$ would also yield an unstable group 
(by Lemmas~\ref{lemma:indices Dn} and~\ref{lemma:On cyclic quotient group}).

\smallskip\noindent
\textbf{Case 3:} 
$|G|$ is neither divisible by $7$ nor by $5$, but is divisible by $3$. 
We show that $G\cong S_3$ or $G\cong C_3$.  
Let $P$ be a Sylow 3-subgroup, and assume $G\neq P$.  
Then $P$ is elementary abelian of order $3$ or $9$
(since no element has order $9$).

\smallskip\noindent
\textbf{Case 3a:} $|P|=3$.
Let $Q$ be a Sylow $2$-subgroup of $G$. Then $|G:Q|=3$, 
so the action of $G$ on the left cosets of $Q$ yields a homomorphism $\varphi\colon G\to S_3$ 
whose kernel satisfies $\Ker(\varphi)\leq Q$.

If $\Ker(\varphi)=\{e\}$, then $G\cong\varphi(G)\leq S_3$, hence $G\cong S_3$ or $C_3$.
If $\Ker(\varphi)$ is nontrivial, 
let $A\lhd G$ be a minimal normal subgroup.
Then $A$ is an elementary abelian $2$-group,
and conjugation by a non-identity element of $P$ 
induces a fixed-point-free automorphism of order $3$ on $A$.
It follows that $|A|=4$, which leads to a contradiction via 
Lemma~\ref{lemma:semidirect Z2^2:Z3}.

\smallskip\noindent
\textbf{Case 3b:} $|P|=9$.
If $N_G(P)\neq P$, then $N_G(P)/P$ has even order; 
in particular, there exists $a\in N_G(P)$ of order $2$.
Set $H=P\langle a\rangle$.
Then $C_H(a)=\langle a\rangle$(since $C_G(a)$ is a $2$-group, so $C_P(a)=\{e\}$),
and $H$ is the group in 
Lemma~\ref{lemma:semidirect Z3^2:Z2}, hence it is unstable.

If $N_G(P)=P$, let $Q$ be a Sylow $2$-subgroup, so that $G=QP$.
Since $|P|=9$ and $P$ is elementary abelian, every non-identity element of $P$ has order $3$.
For any element $t\in G$ of order $3$, the centralizer $C_G(t)$ is a $3$-subgroup.
In particular, distinct Sylow $3$-subgroups intersect trivially, because
if $t\ne e$ lies in the intersection, then both Sylow $3$-subgroups are contained in $C_G(t)$ 
(since they are abelian), and hence they coincide.
Since $N_G(P)=P$, the number of Sylow $3$-subgroups equals $[G:N_G(P)]=[G:P]=|Q|$.
Therefore $G$ has exactly $8|Q|$ elements of order $3$, 
and hence exactly $|G|-8|Q|=|Q|$ elements of $2$-power order.
It follows that the Sylow $2$-subgroup is unique, and thus $Q\lhd G$.
(Alternatively, $Q\lhd G$ follows immediately from Burnside's normal $3$-complement theorem, 
since here $N_G(P)=C_G(P)=P$.)

Now fix any non-identity $t\in P$. Since $Q$ is a $2$-group, 
the Sylow $3$-subgroup of $Q\langle t\rangle$ is $\langle t\rangle$ and has order $3$.
Hence $Q\langle t\rangle$ is unstable by Case~3a.

\medskip
To complete the proof of the theorem, it remains to verify that each group in the list 
\eqref{list:finite stable groups} is stable. 
We begin with several preliminary facts and then apply them to the groups in question.
Let $A\subseteq G$ be nonempty.

\smallskip\noindent
\textbf{Step 1.} Basic observations.
If $A = \{e\}$, then $\partial A = \varnothing$ and $\Cay(G, \partial A)$ has no edges
and is therefore well-covered.
Let $H = \langle A \rangle$. If $\Cay(H,\partial A)$ is well-covered, 
then $\Cay(G,\partial A)$ is also well-covered 
(see Lemma~\ref{lemma:On the subgroup index}). 
Consequently, it suffices to consider generating subsets $A$ of $G$ with $e\in A$.

\smallskip\noindent
\textbf{Step 2.} Complete induced subgraphs.     
If $G=A^{-1}A$, then $\Cay(G,\partial A)$ is a complete graph 
and is trivially well-covered.
Assume $G\neq A^{-1}A$, and let $X$ be the subgraph of $\Cay(G,\partial A)$ 
induced on $G\setminus A^{-1}A$. Recall, that $A^{-1}A=N[e]$ and $\partial A=N(e)$.
If $X$ is well-covered, then so is $\Cay(G,\partial A)$ by Lemma~\ref{lemma:vertex-transitive}. 
In particular, if $|X|\leq2$, the graph $X$ is trivially well-covered, 
and hence $\Cay(G,\partial A)$ is well-covered. 
Moreover, if $X$ is a complete graph, then for any $B \supseteq A$, 
the subgraph induced by $G\setminus B^{-1}B$ is again complete (or empty), 
and hence $\Cay(G,\partial B)$ is well-covered.

\smallskip\noindent
\textbf{Step 3.} Groups of very small order. 
If $|G|\leq5$ and $A$ is a generating subset of $G$, it is straightforward to check 
that $|G\setminus A^{-1}A|\leq2$. Hence $\Cay(G,\partial A)$ is well-covered. 
This shows that all cyclic groups $C_i$ (with $i\leq5$) and 
the group $C_2 \times C_2$ are stable.  
   
\smallskip\noindent
\textbf{Step 4.} The groups
$S_3$, $C_3\times C_3$, $C_2\times C_4$, $C_2^3$, $D_4$, and $Q_8$.
Let $A\subseteq G$ be a generating subset with $e\in A$, and assume that $A$ is minimal with this property.
Since $G$ is not cyclic, we have $|A|\geq3$.

\smallskip\noindent
\textbf{Step 4a.} $G=Q_8$.
Any minimal generating set of the group $Q_8$ consists of two elements of order $>2$.
If $A=\{e,x,y\}$ with $|x|,|y|>2$, then 
\begin{equation}\label{list:dA33}
  A^{-1}A=\{e,x,y,x^{-1},y^{-1},x^{-1}y,y^{-1}x\}.
\end{equation}
In particular, $|A^{-1}A|\geq6$, and hence $|G\setminus A^{-1}A|\leq2$. 
By Step~2, $\Cay(G,\partial A)$ is well-covered.
Thus $G$ is stable.

\smallskip\noindent
\textbf{Step 4b.} $G=C_3\times C_3$.
From \eqref{list:dA33} it follows in this case that $|A^{-1}A|=7$; 
hence $\Cay(G,\partial A)$ is well-covered and $G$ is stable.

\smallskip\noindent
\textbf{Step 4c.} $G=C_2\times C_4$.
If $A=\{e,x,y\}$ with $|x|=2$ and $|y|>2$, then 
\begin{equation}\label{list:dA23}
  A^{-1}A=\{e,x,y,y^{-1},xy,y^{-1}x\}.
\end{equation}
Since $G$ is abelian and $|y|=4$, the elements of the list \eqref{list:dA23} are distinct, 
and we have $|A^{-1}A|=6$.
Thus whether a minimal generating set consists solely of elements of order $4$ (see \eqref{list:dA33}) 
or includes a generator of order $2$ (as \eqref{list:dA23}), $\Cay(G,\partial A)$ is well-covered. 
Hence $G$ is stable.

\smallskip\noindent
\textbf{Step 4d.} $G=S_3$.
In this case, a minimal generating set contains an element of order $2$ and an element of order $3$, 
or both generators have order $2$.
If $A=\{e,x,y\}$ with $|x|=|y|=2$, then 
\begin{equation}\label{list:dA22}
  A^{-1}A=\{e,x,y,xy,yx\}.
\end{equation}
A direct check shows that $|S_3\setminus A^{-1}A|\leq2$ for every generating subset $A$ with $e\in A$. 
Hence $S_3$ is stable by Step~2.

\smallskip\noindent
\textbf{Step 4e.} $G=C_2^3$.
Let $A\subseteq G$ be a minimal generating subset with $e\in A$.
Then $A=\{e,x,y,z\}$, where $x,y,z$ are independent.
Since $G\cong C_2^3$ is elementary abelian, we have $A^{-1}A=AA=\{e,x,y,z,xy,xz,yz\}$,
and hence $|A^{-1}A|=7$ and $|G\setminus A^{-1}A|=1$ for every such $A$.
Thus $G$ is stable by Step~2.
 
\smallskip\noindent
\textbf{Step 4f.} $G=D_4$.
Let $A=\{e,x,y\}$ be a minimal generating subset with $e\in A$.
Then either $|x|=|y|=2$ or, up to swapping $x$ and $y$, we have $|x|=2$ and $|y|=4$.
We treat the case $|x|=|y|=2$; 
the second case yields the same Cayley graph after relabeling the generators.
By \eqref{list:dA22}, we have
\[
X=G\setminus A^{-1}A=\{xyx,\;yxy,\;(xy)^2\}.
\]
Moreover, each product of two distinct vertices of $X$ lies in $\partial A$:
\[
xyx\,(yxy)^{-1}=yx\in\partial A,\qquad
xyx\,(xy)^{-2}=y\in\partial A,\qquad
yxy\,(xy)^{-2}=x\in\partial A.
\]
Hence the induced subgraph on $X$ is $K_3$, and therefore $\Cay(G,\partial A)$ is well-covered by Step~2.
Thus $D_4$ is stable.

\smallskip\noindent
\textbf{Step 5.} Cyclic group of order $7$.  
Let $G=\langle a\rangle$ be cyclic of order $7$.
If $A=\{e, a\}$, then by Lemma~\ref{lemma:indices 01 in Zn}, $\Cay(G,\partial A)$ is well-covered.      
If $|A|\geq3$, then $|G\setminus A^{-1}A|\leq2$, 
and hence $\Cay(G,\partial A)$ is well-covered by Step~2. 
Thus $C_7$ is stable.

\smallskip\noindent
\textbf{Step 6.} The group $G=C_2^4\cong \mathbb{F}_2^4$. 
Write $ijkl$ for $(i,j,k,l)\in\mathbb{F}_2^4$. Consider  
\[
  A = \{0000,\,1000,\,0100,\,0010,\,0001\}.
\]
Then $\partial A$ consists of these five elements together with 
all elements having exactly two ones.
Consequently, $X=G\setminus (A + A)$ is the set of vectors with exactly three ones, 
together with $1111$; thus $|X|=5$.
The subgraph induced on $X$ is complete and hence well-covered,
so $\Cay(G,\partial A)$ is well-covered.
   
Now let $B\subseteq G$ be any generating subset.  
Since $\mathbb{F}_2^4$ is a $4$-dimensional vector space over $\mathbb{F}_2$, 
$B$ must contain at least four elements forming a basis.  
Without loss of generality, assume $B$ includes $0000$, $1000$, $0100$, $0010$, and $0001$.  
We have already seen that the subgraph induced by $G \setminus (A + A)$ is complete; 
hence $G\setminus (B + B)$ also induces a (possibly empty) complete subgraph.  
It follows that $\Cay(G, \partial B)$ is well-covered, and thus $G$ is stable.
This completes the proof.
\end{proof}

\section{\GAP verification for Cayley graphs $\Cay(G,S)$}
\label{section:gap}

This section provides a short \GAP script to compute 
the independent domination number $i(\Gamma)$ and 
the independence number $\alpha(\Gamma)$ for the Cayley graphs considered in the paper.
It is intended as a computational check for readers who 
do not wish to follow the arguments in the main text.

For each group $G$ we fix a subset $A\subseteq G$ (specified case-by-case in the code) and define
\[
S=\{x^{-1}y:\ x,y\in A,\ x\neq y\}.
\]
We then form the Cayley digraph $\Gamma=\Cay(G,S)$ using the \GAP package \texttt{digraphs}
in \cite{Digraphs}.
In our examples $S$ is inverse-closed, 
hence $\Gamma$ may be viewed as an undirected Cayley graph.

The script enumerates all maximal independent sets of $\Gamma$ and 
records the minimum and maximum of their sizes.
Since $\Gamma$ is undirected, maximal independent sets coincide with independent dominating sets; 
hence the minimum (resp. maximum) size among maximal independent sets 
equals $i(\Gamma)$ (resp. $\alpha(\Gamma)$).
Below is a step-by-step guide on how to use the \GAP code provided here 
to compute $i(\Gamma)$ and 
$\alpha(\Gamma)$, assuming that \GAP is already installed and running on your computer.

\smallskip\noindent
\textbf{Step 1.} Load the \texttt{digraphs} package by entering the command
\begin{lstlisting}[gobble=0]
   LoadPackage("digraphs");;
\end{lstlisting}

\smallskip\noindent
\textbf{Step 2.} Enter the only user-defined function. 
For a given group $G$ and a given subset $A\subseteq G$, 
the function constructs the Cayley graph $\Gamma=\Cay(G,\partial A)$,  
and computes $i(\Gamma)$ and $\alpha(\Gamma)$.
This function computes the set $\partial A$ and the set $D = G\setminus A^{-1}A$.   
The set $D$ is not used in the computations, 
but both $\partial A$ and $D$ can be inspected simply by entering the \GAP commands  \verb"r.S;" and \verb"r.D;". 
Here $S=\partial A$ in the code.

\begin{lstlisting}[gobble=0]
PrintIndepInvariants := function(G, A)
    local S, D, Cay, sizes;
    S := Set(List(Cartesian(A, A), x -> x[1]^-1 * x[2]));
    D := Difference(Elements(G), S);
    S := Difference(S, [One(G)]);
    Cay := CayleyDigraph(G, S);
    sizes := List(DigraphMaximalIndependentSets(Cay), Size);
    Print("StructureDescription(G) = \"", StructureDescription(G), "\"; ",
          "i = ", Minimum(sizes), ", alpha = ", Maximum(sizes), ".", "\n\n");
    return rec(S := S, D := D);
end;
\end{lstlisting}

\smallskip\noindent
\textbf{Step 3.}
For each of the eight groups listed below, enter the corresponding \GAP commands.

\begin{enumerate}
  \item The nonabelian group $C_7 \rtimes C_3$ of order $21$.  
    \begin{lstlisting}        
        F := FreeGroup("a", "b");; a := F.1;; b := F.2;;
        G := F / [ b^3, a^7, b*a*b^-1*a^-2 ];;
        a := G.1;; b := G.2;;
        A := [One(G), a, b];;
        r := PrintIndepInvariants(G, A);;
    \end{lstlisting}
    Output:
    \lstinline|StructureDescription(G) = "C7 : C3"; i = 3, alpha = 6.|

  \item The unitriangular matrix group  $UT(3,3)$ of degree three over the field with 
   $3$ elements.
   \begin{lstlisting}        
        F := FreeGroup("a", "b", "c");; a := F.1;; b := F.2;; c := F.3;;
        G := F / [ a^3, b^3, c^3, 
                   b^-1*a^-1*b*a*c^-1, a^-1*c^-1*a*c, b^-1*c^-1*b*c];;
        a := G.1;; b := G.2;; c := G.3;;
        A := [One(G), a, b, c];;
        r := PrintIndepInvariants(G, A);;
    \end{lstlisting}
    Output:
    \lstinline|StructureDescription(G) = "(C3 x C3) : C3"; i = 3, alpha = 6.|

   \item The elementary abelian group $C_3^3$ of order $27$.
    \begin{lstlisting}        
        F := FreeGroup("e1", "e2", "e3");; e1 := F.1;; e2 := F.2;; e3 := F.3;;
        G := F / [ e1^3, e2^3, e3^3, 
                   e1^-1*e2^-1*e1*e2, e1^-1*e3^-1*e1*e3, e2^-1*e3^-1*e2*e3 ];;
        e1 := G.1;; e2 := G.2;; e3 := G.3;;
        A := [One(G), e1, e2, e3];;
        r := PrintIndepInvariants(G, A);;
    \end{lstlisting}
    Output: 
    \lstinline|StructureDescription(G) = "C3 x C3 x C3"; i = 3, alpha = 4.|

  \item The alternating group $A_4$.
    \begin{lstlisting}        
        F := FreeGroup("a", "b", "t");; a := F.1;; b := F.2;; t := F.3;;
        G := F / [ a^2, b^2, t^3, 
                   a^-1*b^-1*a*b, t^-1*a*t*b^-1, t^-1*b*t*b^-1*a^-1 ];;
        a := G.1;; b := G.2;; t := G.3;;
        A := [One(G), b, t];;
        r := PrintIndepInvariants(G, A);;
    \end{lstlisting}      
    Output:
    \lstinline|StructureDescription(G) = "A4"; i = 2, alpha = 3.|
  
  \item  The semidirect product $(C_3\times C_3)\rtimes C_2$. 
  Here, the non-identity element of $C_2$ acts on $C_3\times C_3$ by inverting elements.
    \begin{lstlisting}
        F := FreeGroup("a", "b", "t");; a := F.1;; b := F.2;; t := F.3;;
        G := F / [ a^3, b^3, t^2, a^-1*b^-1*a*b, t*a*t*a, t*b*t*b];;
        a := G.1;; b := G.2;; t := G.3;;
        A := [One(G), a, t, b*t];;
        r := PrintIndepInvariants(G, A);;
    \end{lstlisting}      
    Output:
    \lstinline|StructureDescription(G) = "(C3 x C3) : C2"; i = 2, alpha = 4.|

  \item The direct product $C_2\times D_4$, the group of order $16$ with \GAP ID 11.
    \begin{lstlisting}
        F := FreeGroup("a", "b", "c");; a := F.1;; b := F.2;; c := F.3;;
        G := F / [ a^4, b^2, c^2, a^-1*c^-1*a*c,  b^-1*c^-1*b*c, (a*b)^2];;
        a := G.1;; b := G.2;; c := G.3;;
        A := [One(G), a, b, c];;
        r := PrintIndepInvariants(G, A);;
    \end{lstlisting} 
    Output:
    \lstinline|StructureDescription(G) = "C2 x D8"; i = 2, alpha = 4.|

  \item The direct product $C_2\times Q_8$, the group of order $16$ with \GAP ID 12.
    \begin{lstlisting}
        F := FreeGroup("a", "b", "c");; a := F.1;; b := F.2;; c := F.3;;
        G := F / [ a^4, b^4, c^2, a^2*b^2, 
                                    a^-1*c^-1*a*c, b^-1*c^-1*b*c, (a*b)^2*a^2];;
        a := G.1;; b := G.2;; c := G.3;;
        A := [One(G), a, b, c];;
        r := PrintIndepInvariants(G, A);;
    \end{lstlisting}  
    Output:
    \lstinline|StructureDescription(G) = "C2 x Q8"; i = 2, alpha = 4.|

  \item The group of order 16: \GAP ID 13.
    \begin{lstlisting}
        F := FreeGroup("a", "b", "c");; a := F.1;; b := F.2;; c := F.3;;
        G := F / [ a^4, b^2, c^2, a^-1*b^-1*a*b, a^-1*c^-1*a*c, (b*c)^2*a^2];;
        a := G.1;; b := G.2;; c := G.3;;
        A := [One(G), a, b, c];;
        r := PrintIndepInvariants(G, A);;
    \end{lstlisting}  
    Output:
    \lstinline|StructureDescription(G) = "(C4 x C2) : C2"; i = 2, alpha = 4.|
\end{enumerate}

\end{document}